\newtheorem{theorem}{Theorem}[section]
\newtheorem{lemma}[theorem]{Lemma}
\newtheorem{corollary}[theorem]{Corollary}
\newtheorem{proposition}[theorem]{Proposition}
\newtheorem{remark}[theorem]{Remark}
\theoremstyle{definition}
\def\bR{\mathbb{R}}
\def\bC{\mathbb{C}}
\def\bN{\mathbb{N}}
\def\cF{\mathcal{F}}
\def\cS{\mathcal{S}}
\def\cA{S}
\def\cC{\mathcal{C}}
\def\rd{\bR^d}
\def\rdd{\bR^{2d}}
\def\la{\langle}
\def\ra{\rangle}
\def\lc{\left(}
\def\rc{\right)}
\def\lV{\left\lVert}
\def\rV{\right\rVert}
\def\wt{\widetilde}
\def\*b{*_{\bullet}}
\def\w{\mathrm{w}}
\def\S0{S^0_{0,0}}
\def\Bd'{B_{\delta'}}
\def\cBd'{\bar{B}_{\delta'}}
\def\Sp{\mathrm{Sp}(d,\bR)}
\def\Mp{\mathrm{Mp}(d,\bR)}
\def\SR{\mathrm{U}(2d,\bR)}
\def\OR{\mathrm{O}(2d,\bR)}
\def\diag{\mathrm{diag}}
\def\muS{\mu(S)}
\def\minftys{M^{\infty}_{v_s}(\rd)}
\newcommand{\GLL}{\mathrm{GL}(2d,\bR)}
\def\smo{\setminus\{0\}}
\begin{document}
	
\title[Dispersion, spreading and sparsity estimates for metaplectic operators]{Dispersion, spreading and sparsity of Gabor wave packets for  metaplectic and Schr\"odinger operators}
\author{Elena Cordero, Fabio Nicola and S. Ivan Trapasso}
\address{Dipartimento di Matematica ``G. Peano'', Università di Torino, via Carlo Alberto 10, 10123 Torino, Italy}
\address{Dipartimento di Scienze Matematiche ``G. L. Lagrange'', Politecnico di Torino, corso Duca degli Abruzzi 24, 10129 Torino, Italy}
\email{elena.cordero@unito.it}
\email{fabio.nicola@polito.it}
\email{salvatore.trapasso@polito.it}
\subjclass[2010]{43A65, 35S05, 42B35, 81S30}
\keywords{time-frequency analysis, Gabor wave packets, metaplectic operators, Schr\"odinger equation, dispersive estimates, almost diagonalization, modulation spaces, Wigner distribution.}
%
\begin{abstract}
Sparsity properties for phase-space representations of several types of operators have been extensively studied in recent papers, including pseudodifferential, Fourier integral and metaplectic operators, with applications to time-frequency analysis of Schr\"odinger-type evolution equations. It has been proved that such operators are approximately diagonalized by Gabor wave packets. While the latter are expected to undergo some spreading phenomenon, there is no record of this issue in the aforementioned results. In this paper we prove refined estimates for the Gabor matrix of metaplectic operators, also of generalized type, where sparsity, spreading and dispersive properties are all noticeable. We provide applications to the propagation of singularities for the Schr\"odinger equation.
\end{abstract}
\maketitle
\section{Introduction and discussion of the results}
The relevance of the notion of wave packet in harmonic analysis and mathematical physics can be hardly overestimated. Roughly speaking, we say that a function $g$ on $\rd$ is a wave packet if it does possess good localization in the time-frequency space. To be more concrete, recall that good energy concentration of a function $g \in \cS(\rd)\smo$ (the Schwartz class) on a measurable set $T \subset \rd$ is achieved if there exists $0\le \delta_T \le 1/2$ such that
\[ \lc \int_{\rd \setminus T} |g(y)|^2 dy \rc^{1/2} \le \delta_T \lV g\rV_{L^2}. \] The spectral content of $g$ on a set $\Omega \subset \rd$ is well concentrated if the analogous estimate is satisfied by its Fourier transform $\widehat{g}$ for small $\delta_{\Omega}$. Therefore $g$ is concentrated on the cell $T\times \Omega$ in the phase space and the Donoho-Stark uncertainty principle prescribes a lower bound for the measure of such cell in terms of $\delta_T$ and $\delta_{\Omega}$ \cite{ds}. 

The essential time-frequency support of $g$ can be moved to $(x+T)\times (\xi+\Omega)$ for any choice of $(x,\xi) \in \rdd$ by applying a \textit{time-frequency shift} $\pi(x,\xi)=M_{\xi}T_x$ to $g$, namely as a result of the joint action of the modulation operator $M_{\xi}$ and the translation operator $T_x$, respectively defined as
\[ M_{\xi}g(y)= e^{2\pi iy \cdot \xi}g(y),\qquad T_{x}g(y)= g(y-x), \quad y \in \rd. \]
Functions of the type $\pi(z)g$ for some fixed $z \in \rdd$ and $g \in \cS(\rd)$ are called \textit{Gabor wave packets} or \textit{atoms}. In the case where $g(y)= e^{-\pi |y|^2}$ we speak of Gaussian wave packets; the latter are well-known textbook examples in physics. \\ 

Analysis of functions and operators in terms of Gabor wave packets is one of the primary purposes of modern time-frequency analysis \cite{cr book,gro book}. For instance, a phase-space representation of a signal $f \in L^2(\rd)$ is provided by the \textit{short-time Fourier transform}, which ultimately amounts to a decomposition of $f$ along the uniform boxes in phase space occupied by the Gabor atoms $\pi(z)g$, $z\in \rdd$, for some fixed window function $g \in \cS(\rd)\smo$. It is defined as
\[  V_{g}f (x,\xi)\coloneqq \langle f, \pi(x,\xi) g \rangle=\int_{\rd}e^{-2\pi iy \cdot \xi }
f(y)\, {\overline {g(y-x)}}\,dy, \quad (x,\xi)\in \rdd. \]
Time-frequency analysis of operators can be conducted along the same lines by investigating how they act at the atomic level. Precisely, the (continuous) \textit{Gabor matrix} of a linear continuous operator $A:\cS(\rd)\to \cS'(\rd)$ with respect to analysis and synthesis windows $g,\gamma \in \cS(\rd)\smo$ is defined by 
\[ K_A(w,z) \coloneqq \la A\pi(z)g, \pi(w)\gamma \ra, \quad w,z\in \rdd. \] It can be regarded as an infinite matrix encoding the phase-space features of $A$, since its action on the time-frequency space reads as an integral operator with kernel $K_A$: under the additional assumption $\lV g \rV_{L^2} = \| \gamma \|_{L^2}=1$ we have in fact the identity
\[ V_{\gamma}(Af)(w) = \int_{\rdd} K_A(w,z)V_{g} f(z)dz, \quad w \in \rdd. \]

It is clear that sparsity of $K_A$ is a highly desirable property, for both theoretical and numerical purposes. Several results concerning the approximate diagonalization of operators at the level of Gabor matrix have been appearing in the literature, in particular for pseudodifferential operators \cite{cnt alm diag,gro rze,gro sjo,rocht}, Fourier integral operators \cite{cgnr gen met,cgnr fio alg,cnr tfa int op} and propagators associated with Cauchy problems for Schr\"odinger-type evolution equations \cite{cnr gabor rep,cnr spars gabor}. We stress that wave packets should be tailored in order to best fit the geometry of the problem. For instance, the Gabor matrix of Fourier integral operators arising as propagators for strictly hyperbolic equations does not display a sparse behaviour, while analogous representations involving  curvelet atoms do enjoy super-polynomial decay, cf.\ \cite{candes,cordoba}. See also \cite{guo labate,smith,tataru} for other applications of wave packet analysis.

For the sake of concreteness let us focus on the Schr\"odinger propagator for the free particle $U(t) = e^{i(t/2\pi)\triangle}$, $t \in \bR$, and fix $g \in \cS(\rd)\smo$. For any $t\in \bR$ and $N \in \bN$ there exists a constant $C=C(t,N)>0$ such that the following decay estimate for the Gabor matrix elements of $U(t)$ holds:
\begin{equation} \label{intro diag est schr} |\la e^{i(t/2\pi)\triangle}\pi(z)g,\pi(w)g \ra| \le C (1+|w-\cA_t z|)^{-N}, \quad w,z \in \rdd, \end{equation}
where $\cA_t \in \bR^{2d\times 2d}$ is the block matrix
\begin{equation} \label{intro At free} \cA_t = \begin{bmatrix} I & 2tI \\ O & I \end{bmatrix},\end{equation} $I\in \bR^{d\times d}$ is the identity matrix and $O\in \bR^{d \times d}$ is the null matrix. We remark that $t\mapsto \cA_t$ coincides with the Hamiltonian flow for the free particle in phase space; precisely, the classical equations of motion with Hamiltonian $H(x,\xi)= |\xi|^2$ and initial datum $(x_0,\xi_0) \in \rdd$ are solved by $(x(t),\xi(t))=\cA_t(x_0,\xi_0)$.
Hence \eqref{intro diag est schr} shows that the time evolution of wave packets under $U(t)$ approximately follows the classical flow, in according with the correspondence principle of quantum mechanics. 

Nevertheless, a distinctive feature of wave propagation  dynamics is the unavoidable effect of diffraction. In the situation under our attention it does manifest itself as the well-known phenomenon of the spreading of wave packets. Moreover, a straightforward consequence of the dispersive estimates for the Schr\"odinger propagator \cite{tao book} is that there exists $C>0$ such that
\begin{equation} \label{intro disp est schr} |\la e^{i(t/2\pi)\triangle}\pi(z)g,\pi(w)g \ra| \le C (1+|t|)^{-d/2}, \quad w,z \in \rdd. \end{equation} It may therefore appear quite unsatisfactory that there is no trace of such issues in quasi-diagonalization estimates as \eqref{intro diag est schr}. The purpose of this paper is exactly to prove refined estimates for the Gabor matrix of $U(t)$ where sparsity, spreading and dispersive phenomena are fully represented. To the best of our knowledge, we are not aware of results in this spirit for pseudodifferential or evolution operators. 

Our quest is in fact motivated by the more general situation where $U(t)$ is the Schr\"odinger propagator corresponding to the Hamiltonian $H = Q^\w$, where $Q$ is a real homogeneous quadratic polynomial on $\rdd$ and $Q^\w$ denotes its Weyl quantization, (formally) defined as
\[ Q^\w f(x) = \int_{\rdd}e^{2\pi  i(x-y)\cdot \xi}Q\left(\frac{x+y}{2},\xi\right)f(y)dyd\xi. \]
For example, $(2\pi \xi_j)^\w = -i\partial_{x_j}$, $j=1,\ldots,d$, and $Q^\w = -\triangle$ for the choice $Q(x,\xi)= 4\pi^2|\xi|^2$. 

The propagator $U(t)=e^{-2\pi i t Q^\w}$, $t\in \bR$, is in turn an instance of a \textit{metaplectic operator}. In short, the metaplectic representation is a machinery which associates a symplectic matrix $S \in \Sp$ with a member of the metaplectic group $\mu(S)\in \Mp$, that is a unitary operator on $L^2(\rd)$ defined up to the sign. If $\bR \ni t \mapsto \cA_t\in \Sp$ denotes the classical flow on phase space associated with the quadratic Hamiltonian $H(x,\xi)=Q(x,\xi)$ then $\mu(\cA_t) = \pm e^{-2\pi i t Q^\w}$ - see \eqref{intro At free} for the free particle case. We refer to Section \ref{sec metap} below for further details and \cite{dg symp met,folland} for comprehensive discussions on the metaplectic representation.

It is therefore convenient to focus on metaplectic operators as primary objects of our investigation. The spreading of wave packets under $\mu(S)$ is now connected with the \textit{singular values} of $S\in \Sp$ \cite{cauli}, which occur in couples $(\sigma,\sigma^{-1})$ of positive real numbers. We fix the ordering by labelling the largest $d$ singular values in such a way that $\sigma_1 \ge \ldots \ge \sigma_d \ge 1$; moreover we set $\Sigma=\diag(\sigma_1,\ldots,\sigma_d)$ and introduce the matrices 
\[ \quad D=\begin{bmatrix} \Sigma & O \\ O & \Sigma^{-1} \end{bmatrix}, \quad D'=\begin{bmatrix} \Sigma^{-1} & O \\ O & I \end{bmatrix}, \quad D''=\begin{bmatrix} I & O \\ O & \Sigma^{-1} \end{bmatrix}. \] The singular value decomposition of $S \in \Sp$ (also known as \textit{Euler decomposition} in this setting) has a peculiar form due to the symplectic condition, namely there exist (non-unique) orthogonal and symplectic matrices $U,V$ such that $S=U^\top D V$, cf.\ Proposition \ref{euler dec} below. Such factorization is identified by the triple $(U,V,\Sigma)$. In the following for a given $S\in \Sp$ we will denote by $(U,V,\Sigma)$ an Euler decomposition of $S$ and by $D, D', D''$ the above defined related matrices.

We are now in the position to state our first result, concerning rapidly decaying Gabor wave packets. \par
\begin{theorem}\label{maint schw}
	 For any $g, \gamma \in \cS(\rd)$ and $N >0$ there exists $C>0$ such that, for every $S\in\Sp$,
	\begin{equation}\label{maint estimate}
	\left| \la \muS \pi(z)g, \pi(w)\gamma \ra \right| \le C (\det \Sigma)^{-1/2} (1+|D'U(w-Sz)|)^{-N}, \quad z,w \in \rdd. 
	\end{equation}
\end{theorem}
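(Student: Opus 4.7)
My approach exploits the Euler decomposition $S=U^{\top}DV$ from Proposition~\ref{euler dec}, where $U,V$ are orthogonal and symplectic and $D=\diag(\Sigma,\Sigma^{-1})$, together with the symplectic covariance of time-frequency shifts: for any $A\in\Sp$, $\mu(A)\pi(z)=\pi(Az)\mu(A)$ (up to an irrelevant sign). Since $U$ is orthogonal, $\mu(U^{\top})^{*}=\mu(U)$, so I would first rewrite
\[
\la \muS\pi(z)g,\pi(w)\gamma\ra=\la \mu(D)\pi(Vz)\mu(V)g,\pi(Uw)\mu(U)\gamma\ra
\]
by moving $\mu(V)$ past $\pi(z)$ on the left and $\mu(U)$ past $\pi(w)$ on the right. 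A further application of covariance with $A=D$ produces the shift $\pi(DVz)$. Writing $g_V:=\mu(V)g$, $\gamma_U:=\mu(U)\gamma$ and $h:=\mu(D)g_V$, and using $DVz=USz$ (which follows from $S=U^{\top}DV$ and $U^{\top}U=I$), the modulus of the Gabor matrix equals $|V_{\gamma_U}h(U(w-Sz))|$. This reduces the problem to the bound $|V_{\gamma_U}h(\eta)|\le C(\det\Sigma)^{-1/2}(1+|D'\eta|)^{-N}$ for $\eta\in\rdd$, with $C$ independent of $S$.

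The next step is to make the action of $\mu(D)$ explicit: since $D$ is block-diagonal of symplectic type, $\mu(D)$ is the $L^{2}$-normalized dilation $h(y)=(\det\Sigma)^{-1/2}g_V(\Sigma^{-1}y)$. The substitution $v=y-\eta_{1}$ in the STFT integral then yields, up to a unimodular factor,
\[
V_{\gamma_U}h(\eta_1,\eta_2)=(\det\Sigma)^{-1/2}e^{-2\pi i\eta_1\cdot\eta_2}\int_{\rd}g_V\bigl(\Sigma^{-1}(v+\eta_1)\bigr)\,\overline{\gamma_U(v)}\,e^{-2\pi iv\cdot\eta_2}\,dv,
\]
which isolates the prefactor $(\det\Sigma)^{-1/2}$. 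It remains to show that the integral $I(\eta_1,\eta_2)$ above decays rapidly both in $\Sigma^{-1}\eta_1$ and in $\eta_2$, uniformly in $\Sigma$, $U$, $V$.

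Decay in the first component comes from Peetre's inequality applied to the Schwartz function $g_V$: since $\|\Sigma^{-1}\|\le 1$,
\[
|g_V(\Sigma^{-1}(v+\eta_1))|\le C_N(1+|\Sigma^{-1}\eta_1|)^{-N}(1+|\Sigma^{-1}v|)^{N}\le C_N(1+|\Sigma^{-1}\eta_1|)^{-N}(1+|v|)^{N},
\]
and integration against $\gamma_U\in\cS(\rd)$ absorbs the polynomial factor $(1+|v|)^{N}$. Decay in the second component is obtained by integrating by parts in $v$, which transfers powers of $\eta_2$ to derivatives in $v$; crucially, $\partial_{v_j}[g_V(\Sigma^{-1}(v+\eta_1))]=\sigma_j^{-1}(\partial_j g_V)(\Sigma^{-1}(v+\eta_1))$ with $\sigma_j^{-1}\le 1$, so all derivatives of the rescaled $g_V$ satisfy the same Peetre bound uniformly in $\Sigma$. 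Combining the two estimates yields $|I(\eta_1,\eta_2)|\le C_N(1+|\eta_2|)^{-N}(1+|\Sigma^{-1}\eta_1|)^{-N}$, and the target decay in $|D'\eta|$ follows from the trivial bound $(1+|\eta_2|)(1+|\Sigma^{-1}\eta_1|)\ge 1+|D'\eta|$. The main subtle point is the uniformity of the constant: this is secured because $U,V$ range in the compact group $\SR$, so the relevant Schwartz seminorms of $g_V$ and $\gamma_U$ are bounded independently of $U,V$ by continuity of the metaplectic representation on $\cS(\rd)$.
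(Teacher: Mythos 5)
Your argument is correct, but it follows a genuinely different route from the paper's. The paper squares the matrix element via the Moyal formula \eqref{moyal}, uses the covariance of the Wigner distribution and the symplectic covariance \eqref{symp cov} to rewrite $|\la \muS\pi(z)g,\pi(w)\gamma\ra|^2$ as $\int_{\rdd} Wg(S^{-1}u+S^{-1}w-z)\,W\gamma(u)\,du$, bounds $Wg$ and $W\gamma$ by $v_{-s}$, and then delegates all the work to the convolution-type integral estimate of Lemma \ref{lem multi fine}; the $U,V$-dependence is harmless there because orthogonal matrices preserve $v_{-s}$. You instead implement the Euler decomposition at the operator level through the intertwining relation, reduce to the single short-time Fourier transform $V_{\gamma_U}h(U(w-Sz))$ with $h=\mu(D)\mu(V)g$ an explicit $L^2$-normalized dilation, and get the decay by Peetre's inequality in the first variable and integration by parts in the second. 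Your computations check out (in particular $DVz=USz$, the extraction of $(\det\Sigma)^{-1/2}$, and the final comparison with $1+|D'\eta|$), the argument bypasses the Wigner formalism and Lemma \ref{lem multi fine} entirely, and it even yields the slightly stronger product bound $(1+|\Sigma^{-1}\eta_1|)^{-N}(1+|\eta_2|)^{-N}$. The one step you should flesh out is the uniformity in $U,V\in\SR$: invoking ``continuity of the metaplectic representation on $\cS(\rd)$'' is correct but not free, since continuity in the Schwartz topology (rather than in $L^2$) needs justification. A clean fix: for $V\in\SR$ the symbol $\pi(|x|^2+|\xi|^2)$ is $V$-invariant, so by \eqref{symp cov} the operator $\mu(V)$ commutes with the harmonic oscillator $H_0=-\frac{1}{4\pi}\triangle+\pi|x|^2$, whence $\lV H_0^k\mu(V)g\rV_{L^2}=\lV H_0^k g\rV_{L^2}$ for all $k$; since these seminorms generate the topology of $\cS(\rd)$, every Schwartz seminorm of $\mu(V)g$ (and likewise of $\mu(U)\gamma$) is controlled by seminorms of $g$ uniformly in $V$. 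With that addition your proof is complete and entirely rigorous.
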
 
We see that the simultaneous occurrence of sparsity, spreading and dispersive phenomena are represented by the quasi-diagonal structure along $S$, the dilation by $D'U$ and the factor $(\det\Sigma)^{-1/2}$ respectively. An equivalent form of the previous estimate where the spreading phenomenon is somehow more distributed follows by noticing that $D'U(w-Sz)=D'Uw-D''Vz$. 

The special case of the free particle propagator is treated in detail in Section \ref{sec free prop} below. We just mention here that, for any fixed $t\in \bR$ and any Euler decomposition $(U_t,V_t,\Sigma_t)$ of $\cA_t$, the estimate \eqref{maint estimate} reads
\[ 	\left| \la e^{i(t/2\pi)\triangle} \pi(z)g, \pi(w)\gamma \ra \right| \le C (1+|t|)^{-d/2} (1+|  D'_tU_t(w-\cA_t z)|)^{-N}, \quad w,z\in \rdd. \]
We see that the features of both \eqref{intro diag est schr} and \eqref{intro disp est schr} are now represented, whereas the spreading phenomenon manifests itself as a dilation by the matrix $D'_tU_t$, the nature of which is investigated in Section \ref{sec free prop}. 

We provide results in the same spirit of Theorem \ref{maint schw} for wave packets associated with less regular atoms; in particular we assume that $g$ and $\gamma$ satisfy certain phase-space decay conditions. The function spaces arising by imposing some weighted Lebesgue regularity on the short-time Fourier transform of a function are of primary concern in time-frequency analysis and are known as \textit{modulation spaces} \cite{F1}. To be precise, let $1\le p < \infty$ and $s\in \bR$, and define the polynomial weight function $v_s(z)\coloneqq (1+|z|)^s$ on $\rdd$. The modulation space $M^p_{v_s}(\rd)$ is defined as the subset of temperate distributions $f \in \cS'(\rd)$ such that, for any $g \in \cS(\rd)\smo$, 
\[ \lV f \rV_{M^p_{v_s}}\coloneqq \lc \int_{\rdd} |\la f,\pi(z)g \ra|^p v_s(z)^p dz\rc^{1/p} < \infty. \] Similarly, we say that $f \in \minftys$ if there exists $C>0$ such that, for any $g \in \cS(\rd)\smo$, \[ |\la f, \pi(z)g \ra | \le C (1+|z|)^{-s}, \quad z \in \rdd.\] We write $M^p(\rd)$ for the unweighted case ($s=0$). 

We collect some of the properties of modulation spaces in Proposition \ref{mod sp prop} below. We just recall that they provide a refined framework of (Banach) spaces which encompasses several classical spaces of real harmonic analysis. For instance, we have that $M^2_{v_s}(\rd)$, $s\in \bR$, coincides with the Shubin-Sobolev space of order $s$ \cite{shubin}, namely \[ Q^s(\rd) = L^2_{v_s}(\rd)\cap H^s(\rd) = \{ f \in \cS'(\rd) : f,\widehat{f} \in L^2_{v_s}(\rd)\}.\]  Note in particular that $M^2(\rd)=L^2(\rd)$. Moreover, the modulation spaces $M^p_{v_s}$ are related to the Schwartz class (and its dual space $\cS'$) via the following characterizations, for every $1\le p \le \infty$,
\begin{equation}\label{schw char}
\cS(\rd)= \bigcap_{s\ge 0} M^p_{v_s}(\rd), \quad \cS'(\rd)=\bigcup_{s\ge 0} M^p_{v_{-s}}(\rd). 
\end{equation}
Modulation spaces also provide an optimal environment where to investigate the behaviour of the Gabor matrix of a metaplectic operator, as evidenced by the following result.   
\begin{theorem}\label{maint mp}
\begin{enumerate}[label=(\roman*)]
\item Let $1\le p,q,r \le \infty$ satisfy $1/p + 1/q = 1+ 1/r$.	For any $g \in M^p(\rd)$, $\gamma \in M^q(\rd)$, $S\in\Sp$, there exists $H \in L^r(\rdd)$ such that, for any $z,w \in \rdd$, 
	\begin{equation}\label{eq maint mp}
	\left| \la \muS \pi(z)g, \pi(w)\gamma \ra \right| \le H(D'U(w-Sz)),
	\end{equation} 
	with 
	\begin{equation}\label{eq norm maint mp} \lV H \rV_{L^r} \le  (\det\Sigma)^{1/2-1/r} \lV g \rV_{M^p} \lV \gamma \rV_{M^q}. \end{equation}
	\item Let $s>2d$. For any $g,\gamma \in \minftys$  there exists $H \in L^{\infty}_{v_{s-2d}}(\rdd)$ such that \eqref{eq maint mp} holds, with 
	\begin{equation}\label{eq norm maint minftys} \lV H \rV_{L^{\infty}_{v_{s-2d}}} \le (\det\Sigma)^{-1/2} \lV g \rV_{\minftys} \lV \gamma \rV_{\minftys}. \end{equation}
\end{enumerate}
\end{theorem}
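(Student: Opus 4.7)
The plan is to reduce both parts to the Schwartz-window case (Theorem \ref{maint schw}) through a double Gabor inversion, after which part (i) is settled by Young's inequality and part (ii) by a geometric splitting of the integration domain.

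\textbf{Step 1 (double inversion, Schwartz kernel bound).} Pick a nonzero Schwartz window $\phi_0 \in \cS(\rd)$ with $\|\phi_0\|_{L^2}^2 = c_0$. Applying the Gabor reproducing formula $h = c_0^{-1}\int V_{\phi_0}h(u)\pi(u)\phi_0\, du$ to both $g$ and $\gamma$ (weakly, against the relevant dual space), and using the covariance $\pi(z)\pi(u) = c(z,u)\pi(z+u)$ with $|c(z,u)|=1$, one obtains
\[
\langle \muS\pi(z)g, \pi(w)\gamma\rangle = \frac{c_1}{c_0^2}\iint V_{\phi_0}g(u)\,\overline{V_{\phi_0}\gamma(u')}\,\langle \muS\pi(z+u)\phi_0, \pi(w+u')\phi_0\rangle\, du\, du',\qquad |c_1|=1.
\]
The inner bracket is a Schwartz Gabor matrix entry, so Theorem \ref{maint schw} bounds it by $C_N(\det\Sigma)^{-1/2}(1+|D'U(w+u'-S(z+u))|)^{-N}$ for every $N$. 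The Euler decomposition $S=U^\top D V$ yields the pivotal identity $D'US = D'DV = D''V$ (since $D'D=D''$), whence
\[
D'U(w+u'-S(z+u)) = D'U(w-Sz) + D'Uu' - D''Vu.
\]
Setting $\zeta := D'U(w-Sz)$ and
\[
H(\zeta) := C(\det\Sigma)^{-1/2}\iint |V_{\phi_0}g(u)|\,|V_{\phi_0}\gamma(u')|\bigl(1+|\zeta+D'Uu' - D''Vu|\bigr)^{-N}\, du\, du',
\]
I get the pointwise bound $|\langle\muS\pi(z)g,\pi(w)\gamma\rangle|\le H(D'U(w-Sz))$, so it remains to estimate $\|H\|$ in the required spaces.

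\textbf{Step 2 (part (i), $L^r$ bound).} Substituting $\tilde u = D''Vu$, $\tilde u' = D'Uu'$ each produces the Jacobian factor $\det\Sigma$ (since $|\det(D''V)|=|\det(D'U)|=(\det\Sigma)^{-1}$), turning $H$ into a constant multiple of the convolution $(\det\Sigma)^{3/2}(G*\check G'*K)$, where $G(\tilde u) = |V_{\phi_0}g((D''V)^{-1}\tilde u)|$, $G'(\tilde u') = |V_{\phi_0}\gamma((D'U)^{-1}\tilde u')|$, $\check G'(x)=G'(-x)$, and $K(s)=(1+|s|)^{-N}$. A direct change of variables yields $\|G\|_{L^p} = (\det\Sigma)^{-1/p}\|V_{\phi_0}g\|_{L^p} \asymp (\det\Sigma)^{-1/p}\|g\|_{M^p}$ and similarly $\|\check G'\|_{L^q} = \|G'\|_{L^q} \asymp (\det\Sigma)^{-1/q}\|\gamma\|_{M^q}$; moreover $\|K\|_{L^1}<\infty$ once $N>2d$. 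Two applications of Young's convolution inequality (with exponents $p,q,r$ and then $r,1,r$) give
\[
\|H\|_{L^r}\le C(\det\Sigma)^{3/2-1/p-1/q}\|g\|_{M^p}\|\gamma\|_{M^q} = C(\det\Sigma)^{1/2-1/r}\|g\|_{M^p}\|\gamma\|_{M^q},
\]
using $1/p+1/q=1+1/r$, which is precisely \eqref{eq norm maint mp}.

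\textbf{Step 3 (part (ii), weighted $L^\infty$ bound).} The crucial structural fact is the contractivity $\|D'\|_{\op},\|D''\|_{\op}\le 1$, a direct consequence of $\Sigma\ge I$; hence $|D'Uu'|\le|u'|$ and $|D''Vu|\le|u|$. Inserting $|V_{\phi_0}g(u)|\le\|g\|_{\minftys}(1+|u|)^{-s}$ and its analog for $\gamma$, I split the $(u,u')$-domain into $\{|D''Vu|+|D'Uu'|\le|\zeta|/2\}$ and its complement. On the former, the kernel factor is $\le C(1+|\zeta|)^{-N}$ and $(1+|u|)^{-s}(1+|u'|)^{-s}$ integrates to a finite constant since $s>2d$, contributing $\le C(1+|\zeta|)^{-N}$. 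On the complement, at least one of $|u|\ge|\zeta|/4$ or $|u'|\ge|\zeta|/4$ holds (by the contraction bounds), and the tail estimate $\int_{|u|\ge|\zeta|/4}(1+|u|)^{-s}\,du\le C(1+|\zeta|)^{2d-s}$ delivers the factor $(1+|\zeta|)^{-(s-2d)}$, the other integral being bounded. Choosing $N\ge s$ makes the first piece dominated by the second, so
\[
H(\zeta)\le C(\det\Sigma)^{-1/2}\|g\|_{\minftys}\|\gamma\|_{\minftys}(1+|\zeta|)^{-(s-2d)},
\]
which is exactly \eqref{eq norm maint minftys}.

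\textbf{Main obstacle.} The entire scheme is gated by the algebraic identity $D'US=D''V$: it explains both why $D'U$ rescales the ``incoming'' variable $w-Sz$ and why the change of variables in Step 2 produces exactly the right power of $\det\Sigma$. The subtlest point is Step 3: one must \emph{not} invoke Peetre's inequality to trade $(1+|u|)$ for $(1+|D''Vu|)$, as this would inflate the constant by a factor $\sigma_1^s$ coming from $(D'')^{-1}$. Instead the contractive estimates $\|D'\|_{\op},\|D''\|_{\op}\le 1$ must be used in the forward direction, together with the purely geometric dyadic split of the domain of integration, in order to keep the clean $(\det\Sigma)^{-1/2}$ dependence on the singular values.
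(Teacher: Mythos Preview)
Your proposal is correct. Steps 1 and 2 (the double Gabor inversion, the key identity $D'US=D''V$, the rewriting of $H$ as a threefold convolution, and the Young inequality for part (i)) coincide with the paper's proof essentially line by line.

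Part (ii) is where you diverge. The paper stays in the convolution picture: it bounds $|\tilde F(u)|\le \|g\|_{\minftys}(1+|(D'')^{-1}u|)^{-s}$ and $|\tilde G(u)|\le \|\gamma\|_{\minftys}(1+|(D')^{-1}u|)^{-s}$, then estimates $\|v_{-s}((D')^{-1}\cdot)\ast v_{-s}((D'')^{-1}\cdot)\|_{L^\infty_{v_{s-2d}}}$ via the technical Lemma \ref{lem multi fine}, itself an iterated application of the one-dimensional Lemma \ref{lem impr est}. You instead work in the original $(u,u')$ variables and use only the contractions $\|D'\|_{\op},\|D''\|_{\op}\le 1$ together with the dyadic split of the integration domain. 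Your route is more elementary and self-contained, and it makes the clean $(\det\Sigma)^{-1/2}$ constant visibly independent of any Peetre-type loss; the paper's route has the advantage that the convolution machinery and Lemma \ref{lem multi fine} are reusable (for instance in the proof of Theorem \ref{maint schw} itself and in Theorem \ref{maint gen met}).
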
 Here we used the notation $\| H \|_{L^\infty_{v_s}} = \| H v_s \|_{L^\infty}$. We remark that the best decay in \eqref{eq norm maint mp} is achieved in the case where $p=q=r=1$, namely for Gabor atoms belonging to the modulation space $M^1(\rd)$ - the so-called \textit{Feichtinger algebra} \cite{F2 new segal}. We also highlight the inclusion $\minftys \subset M^1(\rd)$ for $s>2d$, which follows directly from the definition.

In Theorem \ref{maint gen met} we prove an estimate in the same spirit of Theorem \ref{maint mp} for the Gabor matrix of the so-called \textit{generalized metaplectic operators}. This family of operators characterized by the sparsity of their phase-space representation has been introduced and studied in \cite{cgnr gen met,cgnr fio alg} in connection with inverse-closed algebras of Fourier integral operators. Their main properties are recalled in Section \ref{sec metap}.

Finally, we provide an application of the enhanced estimates for the Gabor matrix to the propagation of singularities for the Schr\"odinger equation. The fruitful interplay between time-frequency and microlocal analysis lead to new notions of \textit{global} wave front sets after H\"ormander \cite{hormander quadratic}. Several notions of global wave front set have been introduced to detect (lack of) regularity at modulation space level, see \cite{rw} for a more detailed discussion and \cite{pravda,wahlberg} for further applications. 

Given an open cone $\Gamma$ in $\rdd$ and $g \in \cS(\rd)\smo$ we define the space of $M^1_{(g)}(\Gamma)$ of $M^1$-regular distributions on the cone $\Gamma$ with respect to $g$ as the set of all $f \in \cS'(\rd)$ such that
\begin{equation}\label{M1 cone def} \lV f \rV_{M^1_{(g)}(\Gamma)} \coloneqq \int_{\Gamma} |V_g f(z)| dz < \infty. \end{equation}

The next result shows that $M^1$-regularity of a function $f$ on a conic subset of the phase space is preserved by the action of $\mu(S)$ provided that the cone evolves under $S$. We set $\mathbb{S}^{2d-1}$ for the sphere in $\rdd$. 

\begin{theorem}\label{maint prop sing}
	Let $S \in \Sp$, $g,\gamma \in \cS(\rd)\smo$  and $\Gamma,\Gamma' \subset \rdd$ be open cones such that $\overline{\Gamma'\cap \mathbb{S}^{2d-1}} \subset \Gamma\cap \mathbb{S}^{2d-1}$. If $f\in \cS'(\rd)$ is  $M^1$-regular on $\Gamma$ with respect to $g$ then $\mu(S) f$ is $M^1$-regular on $S(\Gamma')$ with respect to $\gamma$. \par
Precisely, given $r\ge 0$ there exists $C>0$ such that, for any $f \in M^1_{v_{-r}}(\rd)\cap M^1_{(g)}(\Gamma)$ (cf.\ \eqref{schw char}) and $S\in\Sp$ the following estimate holds:
\[ \lV \muS f \rV_{M^1_{(\gamma)}(S (\Gamma'))} \le C (\det\Sigma)^{1/2} \lc \lV f \rV_{M^1_{(g)}(\Gamma)} + (\det\Sigma)^{r}\lV f \rV_{M^1_{v_{-r}}(\rd)} \rc. \]
\end{theorem}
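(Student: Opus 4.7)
The plan is to combine the rapid decay of the Gabor matrix of $\muS$ (Theorem~\ref{maint schw}) with the STFT inversion formula, then exploit the geometric separation of the two cones. After normalizing $g$ so that $\lV g \rV_{L^2}=1$, the identity $f = \int_{\rdd} V_g f(z)\,\pi(z) g\, dz$ yields, upon applying $\muS$ and pairing with $\pi(w)\gamma$,
\[ V_\gamma(\muS f)(w) = \int_{\rdd} V_g f(z)\,\la \muS \pi(z) g, \pi(w)\gamma\ra\, dz, \]
and Theorem~\ref{maint schw} bounds the kernel by $C_N(\det\Sigma)^{-1/2}(1+|D'U(w-Sz)|)^{-N}$ for any $N$ to be chosen. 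I would then substitute $w = Sw'$ in the integral defining $\lV\muS f\rV_{M^1_{(\gamma)}(S(\Gamma'))}$ (the Jacobian equals $1$ since $S\in\Sp$), and use the identity $US = DV$ (immediate from $S = U^\top DV$) together with $D'D = D''$ to rewrite the kernel argument as $D'U(Sw'-Sz) = D''V(w'-z)$. Fubini then gives
\[ \lV \muS f\rV_{M^1_{(\gamma)}(S(\Gamma'))} \le C_N (\det\Sigma)^{-1/2} \int_{\rdd} |V_g f(z)| \lc \int_{\Gamma'}(1+|D''V(w'-z)|)^{-N}\,dw'\rc dz. \]

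The outer $z$-integral is split at $\Gamma$ and $\rdd\setminus\Gamma$. On $\Gamma$, the inner integral is bounded by extending over $\rdd$ and performing the linear change of variables $v = D''V(w'-z)$; since $V$ is orthogonal and $|\det D''|^{-1} = \det\Sigma$, this gives $\det\Sigma \int_{\rdd}(1+|v|)^{-N}dv \le C\det\Sigma$ for $N > 2d$, contributing $C(\det\Sigma)^{1/2}\lV f\rV_{M^1_{(g)}(\Gamma)}$ to the total bound.

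On $\rdd\setminus\Gamma$ the strict inclusion $\overline{\Gamma'\cap\mathbb{S}^{2d-1}}\subset\Gamma\cap\mathbb{S}^{2d-1}$ and a standard compactness argument produce $c>0$ such that $|w'-z|\ge c|z|$ for every $w'\in\Gamma'$ and $z\in\rdd\setminus\Gamma$; since $(D''V)^{-1} = V^\top\diag(I,\Sigma)$ has operator norm $\sigma_1$, this forces $|D''V(w'-z)|\ge c|z|/\sigma_1$. I would then split
\[ (1+|D''V(w'-z)|)^{-N} \le (1+|D''V(w'-z)|)^{-(2d+1)}\,(1+c|z|/\sigma_1)^{-(N-2d-1)}, \]
integrate the first factor in $w'$ (giving $C\det\Sigma$ as above), and choose $N$ so that $N-2d-1\ge r$, which is enough for the elementary comparison $(1+|z|/\sigma_1)^{-(N-2d-1)} \le C\sigma_1^r(1+|z|)^{-r}$ to hold (verified by considering separately $|z|\le\sigma_1$ and $|z|>\sigma_1$, recalling $\sigma_1\ge 1$). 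This produces a bound $C(\det\Sigma)^{1/2}\sigma_1^r\lV f\rV_{M^1_{v_{-r}}(\rd)}\le C(\det\Sigma)^{1/2+r}\lV f\rV_{M^1_{v_{-r}}(\rd)}$, using $\sigma_1\le\sigma_1\cdots\sigma_d=\det\Sigma$. Summing the two contributions gives the claim.

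The principal difficulty is the anisotropic dilation $D''$, which degenerates in the directions where $\Sigma^{-1}$ shrinks: one cannot hope for a clean isotropic lower bound $|D''V(w'-z)|\gtrsim|w'-z|$, and this degeneracy is precisely what forces the weight loss $(\det\Sigma)^r$ in the final estimate. Justifying the inversion formula and the Fubini exchange for $f\in M^1_{v_{-r}}(\rd)$ is routine once $N$ is chosen large enough, since $V_g f \in L^1_{v_{-r}}$ and the kernel decays polynomially at any rate.
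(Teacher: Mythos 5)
Your proposal is correct and follows essentially the same route as the paper's proof: STFT inversion combined with Theorem \ref{maint schw}, the substitution $w=Sw'$ turning the kernel into $v_{-N}(D''V(w'-z))$, and the splitting of the $z$-integral over $\Gamma$ and $\Gamma^c$ using the angular separation of the two cones. The only (harmless) deviation is in the far-cone term, where the paper keeps the anisotropy and evaluates $\int_{\rdd}(1+|D''u|)^{-N}(1+|u|)^r\,du$ coordinate by coordinate to get $(\det\Sigma)^{1+r}$, while you collapse it to the worst singular value via $\lV (D''V)^{-1}\rV_{\mathrm{op}}=\sigma_1$, obtaining the marginally sharper intermediate bound $\det\Sigma\cdot\sigma_1^{r}$ before relaxing $\sigma_1\le\det\Sigma$.
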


If we specialize the previous result to the free particle propagator we get \[ \lV e^{i(t/2\pi)\triangle} f \rV_{M^1_{(\gamma)}(\cA_t (\Gamma'))} \le C \lc (1+|t|)^{d/2}  \lV f \rV_{M^1_{(g)}(\Gamma)} + (1+|t|)^{d(1/2+r)} \lV f \rV_{M^1_{v_{-r}}(\rd)} \rc, \]
where $\cA_t$ is the classical flow in \eqref{intro At free}. The latter can be regarded as a microlocal refinement of known estimates, cf.\ for instance \cite[Prop. 6.6]{wang} and Corollary \ref{cor34} below.\par\bigskip
In short, the paper is organised as follows. In Section 2 we collect some auxiliary results. Section 3 is devoted to the proof of the main results. Section 4, as already anticipated, provides the example of the Schr\"odinger free propagator in detail.

\section{Preliminaries}
\subsection{Notation} We set $|t|^2=t\cdot t$, $t\in\rd$, where $x\cdot y$ is the scalar product on $\rd$. The bracket  $\la  f,g\ra$ denotes the extension to $\cS' (\rd)\times \cS (\rd)$ of the inner product $\la f,g\ra =\int_{\rd} f(t){\overline {g(t)}}dt$ on $L^2(\rd)$. 

The conjugate exponent $p'$ of $p\in [1,\infty]$ is defined by $1/p+1/p'=1$ if $1\le p < \infty$ and as $p'=1$ if $p=\infty$. The symbol $\lesssim$ means that the underlying inequality holds up to a universal positive constant factor; the latter may possibly depend on some ``allowable'' parameter $\lambda$, in which case we write
\[ f\lesssim_{\lambda} g\quad\Rightarrow\quad\exists C=C(\lambda)>0\,:\,f\le Cg. \] Moreover, $f\asymp g$ stands for the case where both $f \lesssim g$ and $g\lesssim f$ hold. 

The characteristic function of a set $A\subset \rd$ is denoted by $1_A$. Recall that $\Gamma \subset \rd$ is a conic subset of $\rd$ if it is invariant under multiplication by positive real numbers, namely $x \in \Gamma \Rightarrow \lambda x \in \Gamma$ for any $\lambda >0$. 

We choose the following normalization for the Fourier transform:
\[ \cF(f)(\xi) = \hat{f}(\xi) = \int_{\rd} e^{-2\pi i x\cdot \xi} f(x)dx, \quad \xi \in \rd.
\] 

The reflection operator is defined as $f^{\vee}(t) = f(-t)$,  $t \in \rd$. 

Given $A,B \in \bR^{d\times d}$ the direct sum $A \oplus B \in \bR^{2d \times 2d}$ is defined as
\[ A\oplus B = \diag(A,B) = \begin{bmatrix} A & O \\ O & B \end{bmatrix}. \]

\subsection{Symplectic matrices}
The canonical symplectic matrix $J \in \bR^{2d \times 2d}$ is defined as
\[ J = \begin{bmatrix}
O & I \\ - I & O
\end{bmatrix}. \] 
The symplectic group $\Sp$ is defined by
\[ \Sp = \{ S \in \GLL \,:\, S^\top J S = J \}. \] 
Recall that the complex unitary group $U(d,\bC)$ is isomorphic to the subgroup of \textit{symplectic rotations} $\SR$ of $\Sp$ \cite{dg symp met}, namely 
\[ \SR = \Sp \cap \OR. \] An equivalent, more concrete representation of symplectic rotations is 
\[ \SR = \left\{ \begin{bmatrix}
A & -B \\ B & A
\end{bmatrix} \in \bR^{2d\times 2d} \, : \, AA^\top +BB^\top = I, \, AB^\top = B^\top A \right\}. \] 

We recall a result on a SVD-like decomposition of symplectic matrices, also known as the \textit{Euler decomposition} in the literature; see \cite[Appendix B.2]{serafini} for details and proofs. 

\begin{proposition} \label{euler dec}
For any $S \in \Sp$ there exist $U,V \in \SR$ such that \[ S=U^\top DV, \quad D = \Sigma \oplus \Sigma^{-1},\]
where $\Sigma=\diag(\sigma_1,\ldots,\sigma_d)$ and $\sigma_1 \ge \ldots \ge \sigma_d \ge \sigma_d^{-1} \ge \ldots \ge \sigma_1^{-1}$ are the singular values of $S$. 
\end{proposition}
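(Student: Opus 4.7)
The plan is to reduce the result to a standard spectral theorem for symmetric positive definite symplectic matrices via the polar decomposition. First I would write the (unique) polar decomposition $S = OP$, where $O \in \OR$ and $P = (S^\top S)^{1/2}$ is symmetric positive definite. Because $S$ is symplectic, one checks that $S^\top S$ is symplectic (the symplectic group is closed under transposition and products), hence so is its unique positive square root $P$; then $O = S P^{-1}$ is both orthogonal and symplectic, i.e.\ $O \in \SR$. So the problem reduces to diagonalizing $P$ by an element of $\SR$.

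Next I would establish the key lemma: if $P \in \Sp$ is symmetric and positive definite, then there exist $R \in \SR$ and $\Sigma = \diag(\sigma_1,\ldots,\sigma_d)$ with $\sigma_1 \ge \cdots \ge \sigma_d \ge 1$ such that $P = R\,(\Sigma \oplus \Sigma^{-1})\,R^\top$. The symplectic condition $P^\top J P = J$ together with $P = P^\top$ gives $P^{-1} = J P J^{-1}$, so $J$ intertwines $P$ and $P^{-1}$. Consequently the spectrum of $P$ is invariant under $\lambda \mapsto \lambda^{-1}$, and for every eigenvalue $\lambda > 1$ the map $v \mapsto Jv$ sends the eigenspace $E_\lambda$ isometrically onto $E_{1/\lambda}$; the eigenspace $E_1$ (if nontrivial) is $J$-invariant. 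Choose an orthonormal basis $\{a_i\}$ of the sum of the $E_\lambda$ with $\lambda \ge 1$ (together with half of $E_1$, chosen to be Lagrangian there), and let $b_i = J a_i$. Assembling the $a_i$ as columns of a matrix $A$ and the $b_i$ as columns of $B$, the block matrix $R = \begin{bmatrix} A & -B \\ B & A \end{bmatrix}$ (after identifying coordinates suitably) turns out to lie in $\SR$, thanks to the orthogonality relations $A^\top A + B^\top B = I$ and $A^\top B = B^\top A$ that follow from the choice $b_i = Ja_i$ and the orthonormality of $\{a_i, b_i\}$.

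Finally I would combine: substituting the diagonalization of $P$ into $S = OP$ yields $S = O R (\Sigma \oplus \Sigma^{-1}) R^\top$, so setting $U^\top := OR$ and $V := R^\top$ gives the claimed factorization $S = U^\top D V$ with $D = \Sigma \oplus \Sigma^{-1}$, and both $U, V \in \SR$ since $\SR$ is a group. Ordering the diagonal entries of $\Sigma$ non-increasingly identifies $\sigma_1,\ldots,\sigma_d,\sigma_d^{-1},\ldots,\sigma_1^{-1}$ as the eigenvalues of $P = (S^\top S)^{1/2}$, i.e.\ the singular values of $S$.

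The main obstacle is the second step: verifying that the eigenbasis of $P$ can be organized so that the resulting orthogonal change of basis lies in $\SR$ rather than merely in $\OR$. The construction $b_i = Ja_i$ is natural, but care is required on the eigenspace $E_1$, where $J$ acts as a skew-symmetric involution on a $J$-invariant Euclidean space and one must pick a Lagrangian-type splitting so the $a_i, b_i$ remain orthonormal. Equivalently, one can invoke the isomorphism $\SR \cong U(d,\bC)$ and view $P$ as a positive Hermitian matrix whose complex unitary diagonalization furnishes $R$; this complex picture makes the block structure transparent and is probably the cleanest way to handle the $E_1$ subtlety.
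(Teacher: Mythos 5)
Your argument is correct and is essentially the standard proof of the symplectic singular value (Euler/Bloch--Messiah) decomposition via polar decomposition $S=OP$ with $P=(S^\top S)^{1/2}$, which is precisely the route taken in the reference the paper defers to for this statement (the paper gives no proof of its own, citing Serafini, Appendix B.2). The two points that need care --- that the positive square root $P$ is again symplectic, and the Lagrangian choice of basis inside the eigenspace $E_1$ so that the diagonalizing matrix lands in $\SR$ and not merely in $\OR$ --- are exactly the ones you identify and resolve.
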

We stress that while $\Sigma$ is uniquely determined for given $S$ once the order of the singular values is fixed, the matrices $U$ and $V$ appearing in such factorization are not unique in general due to possible occurrence of degenerate singular values. 

We identify any Euler decomposition of $S$ as $U^\top DV$ with the triple $(U,V,\Sigma)$. 

Recall from the Introduction that other useful related matrices are \begin{equation}\label{def D'D''} D' = \Sigma^{-1}\oplus I, \quad D'' = I\oplus\Sigma^{-1}. \end{equation}

\subsection{Modulation spaces}\label{sec mod sp}
We provide a collection of time-frequency analysis tools that are needed in the paper. The reader may consult \cite{gro book} for further details and proofs of the mentioned results. 

Recall that the short-time Fourier transform (STFT) of a temperate distribution $f\in\cS'(\rd)$ with respect to the window function $g \in \cS(\rd)\setminus\{0\}$ is defined as
\begin{equation}\label{FTdef}
V_gf (x,\xi)\coloneqq \langle f, \pi(x,\xi) g\rangle=\int_{\rd}e^{-2\pi iy \cdot \xi }
f(y)\, {\overline {g(y-x)}}\,dy.
\end{equation}

The STFT is intimately connected with other well-known phase-space transforms such as the \textit{Wigner distribution}
\begin{equation} W(f,g)(x,\xi )=\int_{\mathbb{R}^{d}}e^{-2\pi iy\cdot \xi }f\left(x+\frac{y}{2}\right)%
\overline{g\left(x-\frac{y}{2}\right)}\ dy. \label{wig def} \end{equation}
We write $Wf$ when $f=g$. In particular, we have
\begin{equation}\label{stft rel} W(f,g) = 2^d e^{4\pi i x\cdot \xi} V_{g^\vee}f(2x,2\xi). \end{equation}
We also recall the orthogonality identity (also known as \textit{Moyal formula} for the Wigner distribution):
\begin{equation} \label{moyal}
\la V_{g_1}f_1, V_{g_2}f_2 \ra_{L^2} = \la W(f_1,g_1), W(f_1,g_1) \ra_{L^2} = \la f_1,f_2 \ra_{L^2} \overline{\la g_1,g_2 \ra_{L^2}},
\end{equation} 
for any $f_1,g_1,f_2,g_2 \in L^2(\rd)$. 
The behaviour of the Wigner distribution under time-frequency shifts is given by
\begin{equation} \label{wig cov}
W(\pi(w)f,\pi(z)g)(u) = c(w,z) M_{J(w-z)}T_{\frac{w+z}{2}} W(f,g)(u), \quad u,w,z \in \rdd,
\end{equation}
where $c(w,z) = e^{\pi i (w_1+z_1)\cdot(z_2-w_2)}$. The identity $W(\pi(z)f)(u) = Wf(u-z)$ is often referred to as the \textit{covariance property} of $Wf$.  \\

Recall from the Introduction that, for $1\le p \le \infty$ and $s\in \bR$, the modulation space $M^p_{v_s}(\rd)$ is the set of $f \in \cS'(\rd)$ such that, for any $g \in \cS(\rd)\smo$, 
\begin{equation}\label{mod sp norm} \lV f \rV_{M^p_{v_s}} \coloneqq \lV V_g f \rV_{L^p_{v_s}} = \lc \int_{\rdd}|V_g f(z)|^p v_s(z)^p dz \rc^{1/p} < \infty,\end{equation} with trivial modification in the case $p=\infty$. We collect below the relevant properties of modulation spaces that will be repeatedly used in the rest of the paper, see \cite{dg symp met,gro book} for proofs and generalizations. 
\begin{proposition} \label{mod sp prop} Consider $1\le p \le \infty$ and $s,r\in \bR$ such that $|s| \le r$. \begin{enumerate}[label=(\roman*)]
		\item $M^p_{v_s}(\rd)$ is a Banach space with the norm \eqref{mod sp norm}, which is independent of the window function $g$ (in the sense that different windows yield equivalent norms). Moreover, the class of admissible non-zero windows can be extended from $\cS(\rd)$ to $M^1_{v_r}(\rd)$.
		\item If $p< \infty$ the Schwartz class $\cS(\rd)$ is dense in $M^p_{v_s}(\rd)$. Moreover, for any $g \in \cS(\rd)\smo$, 
		\[ f \in \cS(\rd) \Longrightarrow V_g f, \, Wf \in \cS(\rdd). \]
		\item If $p_1\le p_2$ and $s_2 \le s_1$, then $M^{p_1}_{v_{s_1}}(\rd) \subseteq M^{p_2}_{v_{s_2}}(\rd)$. In particular, for $|s|\le r$,
		\[ M^1_{v_r}(\rd) \subseteq M^p_{v_s}(\rd) \subseteq M^{\infty}_{v_{-r}}(\rd). \]
		\item If $1\le p < \infty$ then $\lc M^p_{v_s}(\rd)\rc'\simeq M^{p'}_{v_{-s}}(\rd)$ and the duality is concretely given by 
		\[ \la f,h \ra = \int_{\rdd} V_{g}f(z) \overline{V_{g}h(z)} dz, \] for $f \in M^p_{v_s}(\rd)$, $h \in M^{p'}_{v_{-s}}(\rd)$ and $g \in M^1_{v_r}(\rd)$ with $\lV g \rV_{L^2} = 1$. 
	\end{enumerate}
\end{proposition}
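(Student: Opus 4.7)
The plan is to build all four items from two fundamental ingredients: the change-of-window pointwise estimate
\begin{equation*}
|V_{g_1} f(z)| \le \lV \gamma \rV_{L^2}^{-2} \bigl(|V_{\gamma} f| * |V_{g_1} \gamma|\bigr)(z), \quad z\in\rdd,
\end{equation*}
valid for any Schwartz reference window $\gamma\in\cS(\rd)\smo$, together with the adjoint inversion formula $f = \lV \gamma \rV_{L^2}^{-2}\int_{\rdd} V_{\gamma} f(z)\,\pi(z)\gamma\,dz$ (interpreted weakly on $\cS'(\rd)$). Paired with the weighted Young inequality $\lV F * G \rV_{L^p_{v_s}} \lesssim \lV F \rV_{L^p_{v_s}} \lV G \rV_{L^1_{v_{|s|}}}$, which encodes the $v_{|s|}$-moderateness of $v_s$, this reduces every STFT-based norm comparison to controlling a single cross-window quantity of the form $\lV V_{g_1}\gamma \rV_{L^1_{v_{|s|}}}$.

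For part (i), window independence is immediate from the estimate above once one knows that $V_{g_1}\gamma\in L^1_{v_{|s|}}(\rdd)$ whenever $g_1\in M^1_{v_r}(\rd)$ and $|s|\le r$, and symmetry in the two windows yields the reverse comparison; this simultaneously justifies the enlargement of the class of admissible windows from $\cS(\rd)$ to $M^1_{v_r}(\rd)$. Completeness of $M^p_{v_s}(\rd)$ then transfers from completeness of $L^p_{v_s}(\rdd)$ via the embedding $f\mapsto V_\gamma f$, with the inversion formula identifying the limit of any Cauchy sequence as an element of $\cS'(\rd)$. For (iii), the inclusions $M^{p_1}_{v_{s_1}}\subseteq M^{p_2}_{v_{s_2}}$ follow from monotonicity of the weights $v_{s_2}\le v_{s_1}$ together with H\"older's inequality in weighted $L^p$-spaces on $\rdd$ applied to $V_\gamma f$ against $V_\gamma\gamma$, which is Schwartz. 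For (iv), the duality $(M^p_{v_s})'\simeq M^{p'}_{v_{-s}}$ is obtained by composing the STFT embedding with standard $L^p_{v_s}$/$L^{p'}_{v_{-s}}$ duality and transporting the pairing back through the inversion formula; the concrete integral representation then drops out of Moyal's formula \eqref{moyal}.

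For part (ii), density of $\cS(\rd)$ in $M^p_{v_s}(\rd)$ when $p<\infty$ is proved by approximating $f$ with $f_n\coloneqq \lV\gamma\rV_{L^2}^{-2}\int_{\rdd}\chi_n(z)\,V_\gamma f(z)\,\pi(z)\gamma\,dz$, where $\gamma$ is a Schwartz window and $\chi_n$ is a smooth compactly supported cutoff converging pointwise to $1$; the identity $V_\gamma f_n = (\chi_n V_\gamma f)* V_\gamma\gamma$ combined with dominated convergence in $L^p_{v_s}(\rdd)$ (which crucially requires $p<\infty$) yields $\lV f-f_n \rV_{M^p_{v_s}}\to 0$, and a further convolution-mollification promotes $f_n$ to a Schwartz function. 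The auxiliary claim $f\in\cS(\rd)\Rightarrow V_g f,\,Wf\in\cS(\rdd)$ is verified directly, since for Schwartz $f,g$ the map $(x,y)\mapsto f(y)\overline{g(y-x)}$ lies in $\cS(\rdd)$ and both $V_g f$ and $Wf$ arise as partial Fourier transforms of it.

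The technical linchpin is the estimate $\lV V_{g_1}g_2 \rV_{L^1_{v_r}(\rdd)}<\infty$ whenever $g_1,g_2\in M^1_{v_r}(\rd)$, which is precisely what permits the admissible window class to be expanded and therefore drives the internal consistency of the whole machinery. I expect this to be the main obstacle: it is typically proved by iterating the change-of-window identity against an auxiliary Gaussian $\gamma_0\in\cS(\rd)$, for which $V_{\gamma_0}\gamma_0$ and all its translates decay superpolynomially and hence lie in $L^1_{v_r}$ for every $r\ge 0$, and then closing the loop by two consecutive applications of the weighted Young inequality.
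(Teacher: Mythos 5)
The paper states this proposition without proof, deferring to \cite{dg symp met,gro book}, and your argument is precisely the standard one from those references (change of window via the reproducing identity, the weighted Young inequality, the estimate $\lV V_{g_1}g_2\rV_{L^1_{v_r}}\lesssim \lV g_1\rV_{M^1_{v_r}}\lV g_2\rV_{M^1_{v_r}}$ obtained through an auxiliary Gaussian, and the STFT embedding into $L^p_{v_s}(\rdd)$), so it is correct and consistent with the paper's implicit approach. One cosmetic point: the inclusion $M^{p_1}_{v_{s_1}}(\rd)\subseteq M^{p_2}_{v_{s_2}}(\rd)$ for $p_1\le p_2$ rests on Young's convolution inequality applied to the reproducing identity $V_\gamma f=\lV\gamma\rV_{L^2}^{-2}\,V_\gamma f * V_\gamma\gamma$ (or on H\"older plus log-convexity of the $L^p$ norms), not on H\"older alone, since $L^{p_1}(\rdd)\not\subseteq L^{p_2}(\rdd)$ on a space of infinite measure.
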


It turns out that the STFT is injective in $\cS'(\rd)$, as a result of the following \textit{inversion formula}: for $f \in \cS'(\rd)$ and $g,\gamma \in \cS(\rd)\smo$ such that $\la g,\gamma \ra \ne 0$ we have
\begin{equation}\label{rec form cont}
f = \frac{1}{\la \gamma,g \ra}  \int_{\rdd} V_g f (z) \pi(z)\gamma dz, \end{equation} in the sense of temperate distributions. The same result extends to the case where $f \in M^p_{v_s}(\rd)$ and $g,\gamma \in M^1_{v_r}(\rd)\smo$ with $s$ and $r$ as in Proposition \ref{mod sp prop}. The particular choice $\gamma=g$ yields
\begin{equation}\label{id mod sp} \mathrm{Id}_{M^p_{v_s}} = \frac{1}{\lV g \rV_{L^2}^{2}} V_g^* V_g, \end{equation}
where $V_g^*$ is the adjoint STFT defined as a vector-valued integral by 
\[ V_g^* F = \int_{\rdd} F(z)\pi(z)g dz, \quad F \in \cS(\rdd). \] 
Moreover, for $g \in M^1_{v_r}(\rd)\smo$ we have that $V_g:M^p_{v_s}(\rd)\to L^p_{v_s}(\rdd)$ and $V_g^*:L^p_{v_s}(\rdd) \to M^p_{v_s}(\rd)$ are continuous maps. 

The inversion formula enables an efficient phase-space analysis of operators as already mentioned in the Introduction. Consider a bounded linear operator $A:\cS(\rd)\to\cS'(\rd)$ and $g,\gamma \in \cS(\rd)\smo$; it is not restrictive to assume $\lV g \rV_{L^2} = \lV \gamma \rV_{L^2}=1$. Using \eqref{id mod sp} we have that 
\[ A = V_{\gamma}^* V_{\gamma} A V_{g}^* V_{g}= V_{\gamma}^*\wt{A}V_{g}, \]
where $\wt{A}\coloneqq V_{\gamma} A V_{g}^*$ is an integral operator in $\rdd$ with integral kernel given by the \textit{Gabor matrix} $K_A$, that is
\begin{equation}\label{gabor ker} \wt{A}F(w) = \int_{\rdd} K_A(w,z)F(z)dz, \quad  K_A(w,z) = \la A\pi(z)g,\pi(w)\gamma \ra, \quad w \in \rdd. \end{equation}

\subsection{Weyl operators}\label{sec weyl}
Given $a \in \cS'(\rdd)$ (\textit{symbol}) the corresponding Weyl operator $a^\w : \cS(\rd) \to \cS'(\rd)$ is defined by duality as
\[ \la a^\w f,g\ra = \la a, W(g,f)\ra, \quad f,g\in \cS(\rd), \]
where $W(g,f)$ is the Wigner distribution introduced in \eqref{wig def}. 
Modulation spaces have been extensively employed as symbol classes as well as background spaces where to study boundedness of Weyl operators. Among the several results in this respect we highlight the special properties of Weyl operators with symbols in $M^{\infty,1}(\rdd)$ - the so-called \textit{Sj\"ostrand class} after \cite{sjo}. It is a modulation space of more general form than above, since its norm involves a mixed Lebesgue regularity condition on the short-time Fourier transform of a distribution: for any $g\in \cS(\rd)\smo$, 
\[ \lV f \rV_{M^{\infty,1}}\coloneqq \lV V_g f \rV_{L^{\infty,1}} = \int_{\rd} \sup_{x\in \rd} |V_gf (x,\xi)| d\xi < \infty. \]

We list below some results first proved in \cite{gro sjo}, see also \cite{bcgt alm diag,cnt alm diag,ct 20} for generalizations and further results on almost diagonalization of operators.  
\begin{theorem}\label{weyl properties}
Fix $g,\gamma \in M^1(\rd)$ and consider $a \in \cS'(\rdd)$. We have that $a \in M^{\infty,1}(\rdd)$ if and only if there exists a function $H \in L^1(\rdd)$ such that
\[ |\la a^\w \pi(z)g,\pi(w)\gamma \ra| \le H(w-z), \quad z,w \in \rdd. \]
The controlling function $H$ can be chosen as 
\[ H(w) = \sup_{z \in \rdd} |V_{\Phi}a(z,w)|,\quad \Phi = W(\gamma,g),  \]
hence $\lV H \rV_{L^1} \asymp \lV a \rV_{M^{\infty,1}}$. 
Moreover, $a^\w$ is bounded on any modulation space $M^p(\rd)$, $1 \le p \le \infty$.
\end{theorem}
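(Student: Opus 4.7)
The plan is to identify the Gabor matrix of $a^\w$ with a short-time Fourier transform of the symbol $a$ on $\rdd$ taken against the window $\Phi = W(\gamma,g)$, and then read off the three assertions from this identification.

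First I would derive the core identity. By the Weyl definition of Section \ref{sec weyl},
\[
\la a^\w \pi(z)g,\pi(w)\gamma\ra = \la a, W(\pi(w)\gamma,\pi(z)g)\ra.
\]
Applying the covariance formula \eqref{wig cov} to $W(\pi(w)\gamma,\pi(z)g) = c(w,z)\, M_{J(w-z)}T_{(w+z)/2}\Phi$, with $|c(w,z)|=1$, and recalling that $V_\Phi a(u,\eta)=\la a, M_\eta T_u\Phi\ra$, I obtain the key identity
\begin{equation}\label{planEq}
\lv \la a^\w \pi(z)g,\pi(w)\gamma\ra\rv = \lv V_\Phi a\lc \tfrac{w+z}{2},\, J(w-z)\rc\rv .
\end{equation}
For this to make sense I need $\Phi$ to be a non-zero $M^1(\rdd)$ window, which comes from the standard mapping $W:M^1(\rd)\times M^1(\rd)\to M^1(\rdd)$ combined with Proposition \ref{mod sp prop}(i) applied on $\rdd$; the latter also guarantees that the $M^{\infty,1}$-norm computed against $\Phi$ is equivalent to the usual one.

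For the forward implication I set $H(\zeta) \coloneqq \sup_{u\in\rdd}|V_\Phi a(u, J\zeta)|$. Then \eqref{planEq} immediately yields $|\la a^\w\pi(z)g,\pi(w)\gamma\ra|\le H(w-z)$, and the orthogonal change of variable $\xi = J\zeta$ gives $\lV H\rV_{L^1} = \lV a\rV_{M^{\infty,1}}$. Conversely, assuming any $H\in L^1(\rdd)$ dominates the Gabor matrix, for each $(u,\xi)\in\rdd\times\rdd$ I choose $w = u - J\xi/2$ and $z = u + J\xi/2$, so that $(w+z)/2 = u$ and $J(w-z)=\xi$ (using $J^2=-I$); inserting into \eqref{planEq} gives $|V_\Phi a(u,\xi)|\le H(-J\xi)$, and taking the supremum in $u$ and integrating in $\xi$ (with the orthogonal change of variable $\xi\mapsto -J\xi$) produces $\lV a\rV_{M^{\infty,1}}\le \lV H\rV_{L^1}$.

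The $M^p$-boundedness then drops out of Young's convolution inequality applied to the Gabor matrix representation \eqref{gabor ker}: writing
\[ V_\gamma(a^\w f)(w) = \int_{\rdd} K_{a^\w}(w,z)\,V_g f(z)\,dz, \]
and using the pointwise bound $|K_{a^\w}(w,z)|\le H(w-z)$, I get $|V_\gamma(a^\w f)|\le H*|V_g f|$, hence $\lV a^\w f\rV_{M^p} \lesssim \lV H\rV_{L^1}\lV f\rV_{M^p}$ for every $1\le p\le\infty$. The main subtle point I anticipate is the legitimacy of the identity \eqref{planEq} for a generic symbol $a \in \cS'(\rdd)$: one needs $\Phi = W(\gamma,g)$ to be a genuine non-zero element of $M^1(\rdd)$ so that all pairings (and the window-independence of the $M^{\infty,1}$-norm) hold at the distributional level, which is precisely where Proposition \ref{mod sp prop}(i) and the $W$-boundedness on $M^1$ are indispensable.
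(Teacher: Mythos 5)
Your proposal is correct. Note, however, that the paper does not prove this theorem at all: it is quoted from the literature (``results first proved in \cite{gro sjo}''), and your argument is essentially the standard proof given there --- the identity $|\la a^\w \pi(z)g,\pi(w)\gamma\ra| = |V_\Phi a(\tfrac{w+z}{2}, J(w-z))|$ with $\Phi=W(\gamma,g)$, obtained from $\la a^\w f,g\ra=\la a,W(g,f)\ra$ and the covariance formula \eqref{wig cov}, followed by taking suprema in the first variable one way and inverting the affine change of variables the other way, and Young's inequality for the $M^p$-bounds. One small point in your favour: the controlling function as printed in the theorem, $H(w)=\sup_z|V_\Phi a(z,w)|$, omits the rotation by $J$; your choice $H(\zeta)=\sup_u|V_\Phi a(u,J\zeta)|$ is the one that actually dominates the Gabor matrix, and since $J$ is orthogonal the two have the same $L^1$-norm, so the norm equivalence $\lV H\rV_{L^1}\asymp\lV a\rV_{M^{\infty,1}}$ is unaffected. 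The only ingredients you use that are not stated in the paper are the boundedness of $W:M^1(\rd)\times M^1(\rd)\to M^1(\rdd)$ and the extension of the window class for $M^{\infty,1}$ to nonzero $M^1$-windows; both are standard and correctly flagged as the points where care is needed.
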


\subsection{Metaplectic operators}\label{sec metap}
Recall that the \textit{metaplectic group} $\Mp$ is the universal double cover of the symplectic group $\Sp$. The corresponding faithful, strongly continuous unitary representation in $L^2(\rd)$ allows us to directly interpret $\Mp$ as a subgroup of $\mathcal{U}(L^2(\rd))$, hence consisting of \textit{metaplectic operators}. We use the notation $\muS$ to denote metaplectic operators defined up to sign, where $S = \rho^{\mathrm{Mp}}(\muS) \in \Sp$ and $\rho^{\mathrm{Mp}}:\Mp\to\Sp$ is the group projection, hence 
\[ \mu(AB) = \pm \mu(A)\mu(B), \quad A,B\in \Sp. \] 
An operator $\mu(S)$ satisfies the intertwining relation
\[
\pi(S z)=\mu(S) \pi(z) \mu(S)^{-1},\quad z \in \rdd.
\]
We provide some elementary examples of metaplectic operators which are associated with special elements of $\Sp$. In fact, it turns out that the metaplectic group is in some sense generated by operators $\mu(J)$, $\mu(\cA)$ and $\mu(\cC)$ defined below, cf.\ \cite{dg symp met} for a precise account.
\begin{enumerate}
	\item The Fourier transform is a metaplectic operator associated with the canonical symplectic matrix, that is $\mu(J)f = \pm \cF(f)$. Notice in particular that $\mu(-J)= \pm \cF^{-1}$. 
	\item Consider $A \in \mathrm{GL}(d,\bR)$ and set 
	\[ \cA = \begin{bmatrix} A & O \\ O & (A^{-1})^\top \end{bmatrix}. \]
	The metaplectic operator $\mu(\cA)$ acts as a rescaling by $A$:
	\[ \mu(\cA)f(t) = \pm |\det A|^{-1/2}f(A^{-1}t). \]
	\item Let $C \in \bR^{d\times d}$ be a real symmetric matrix and set 
	\[ \cC = \begin{bmatrix} I & O \\ C & I \end{bmatrix}. \]
	The metaplectic operator $\mu(\cC)$ is a chirp multiplication:
	\[ \mu(\cC)f(t) = \pm e^{\pi i t\cdot Ct} f(t). \]
\end{enumerate} 

We already mentioned that an important example of metaplectic operator is provided by the Schr\"odinger propagator for the free particle $U(t)=e^{i(t/2\pi)\triangle}$, $t\in \bR$. This can be easily derived from the examples above since $U(t)$ is a Fourier multiplier with chirp symbol $m_t(\xi) = e^{-2\pi i t |\xi|^2}$ on $\rd$, hence 
\begin{equation}\label{schr prop} U(t) = \cF^{-1}m_t\cF = \pm \mu(\cA_t), \quad  \cA_t= \begin{bmatrix}
I & 2t I \\ O & I
\end{bmatrix}, \quad t \in \bR. \end{equation}

A distinctive property of the Weyl calculus is known as \textit{symplectic covariance} {\cite[Thm.\ 215]{dg symp met}}: for any $S \in \Sp$ and $a \in \cS'(\rdd)$,
\begin{equation}\label{symp cov}
(a \circ S)^\w = \mu(S)^{-1}a^\w \mu(S).
\end{equation} 

Metaplectic operators have been thoroughly studied in the framework of phase-space analysis \cite{dg symp met,folland} and also in connection with the Schr\"odinger equation with quadratic Hamiltonians \cite{cnr int rep,cnr tfa int op,cn met}. We mention below two relevant results concerning the Gabor matrix of a metaplectic operator and the boundedness on modulation spaces.
\begin{theorem}\label{met op properties}
	Consider $\muS \in \Mp$ and $g,\gamma\in \cS(\rd)$. For any $N\ge 0$ we have
	\[ |\la \muS \pi(z)g,\pi(w)\gamma \ra| \lesssim_{N,S} (1+|w-Sz|)^{-N}, \quad w,z \in \rdd. \]
As a consequence, for any $1\le p \le \infty$ and $s \in \bR$, the operator $\muS$ is bounded from $M^p_{v_s}(\rd)$ into itself.  
\end{theorem}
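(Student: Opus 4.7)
The plan is to reduce the Gabor matrix element to a single STFT of a Schwartz function via the intertwining relation, and then deduce modulation space boundedness by a Schur-type argument on the resulting integral kernel.

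First I would exploit the intertwining identity $\pi(Sz)=\mu(S)\pi(z)\mu(S)^{-1}$ to rewrite
\[ \la \mu(S)\pi(z)g,\pi(w)\gamma\ra = \la \pi(Sz)\mu(S)g,\pi(w)\gamma\ra = V_{\gamma}(\mu(S)g)(w-Sz)\cdot e^{i\varphi(w,Sz)}, \]
up to a unimodular phase factor coming from the standard covariance formula $V_{\gamma}(\pi(z_0)h)(u) = e^{2\pi i x_0\cdot(\xi_0-\xi)} V_{\gamma}h(u-z_0)$ for $z_0=(x_0,\xi_0)$, $u=(x,\xi)$. The crucial observation is that $\mu(S)$ preserves the Schwartz class (it is a unitary combination of Fourier transforms, dilations and chirp multiplications by the generation result recalled in Section~\ref{sec metap}), so $\mu(S)g\in\cS(\rd)$, and then Proposition~\ref{mod sp prop}(ii) gives $V_{\gamma}(\mu(S)g)\in\cS(\rdd)$. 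In particular, for any $N\ge 0$,
\[ |V_{\gamma}(\mu(S)g)(u)|\lesssim_{N,S}(1+|u|)^{-N}, \]
which, applied at $u=w-Sz$, yields the claimed Gabor matrix estimate.

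For the modulation space boundedness, I would use the Gabor kernel representation \eqref{gabor ker}: choosing a fixed window $g\in\cS(\rd)$ with $\|g\|_{L^2}=1$, one has
\[ V_g(\mu(S)f)(w)=\int_{\rdd}K(w,z)\,V_g f(z)\,dz,\qquad |K(w,z)|\lesssim_N (1+|w-Sz|)^{-N}. \]
Since $S\in\Sp$ is invertible with $|\det S|=1$, the change of variables $u=Sz$ preserves Lebesgue measure and turns the operator into essentially a convolution with the integrable kernel $(1+|\cdot|)^{-N}$. The weight is handled by the submultiplicativity of $v_s$: for a fixed symplectic $S$ one has $v_s(w)\le v_s(w-Sz)^{|s|}v_s(Sz)\lesssim_S v_{|s|}(w-Sz)v_s(z)$, so choosing $N>|s|+2d$ makes $v_{|s|}(w-u)(1+|w-u|)^{-N}$ an $L^1$ kernel on $\rdd$. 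A standard application of Young's inequality (or Schur's test when $p=\infty$) then gives $\|V_g(\mu(S)f)\|_{L^p_{v_s}}\lesssim_S\|V_g f\|_{L^p_{v_s}}$, that is, the asserted boundedness on $M^p_{v_s}(\rd)$.

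No real obstacle is expected: the entire argument hinges on the intertwining relation and the fact that $\mu(S)$ preserves $\cS(\rd)$, both of which have been recorded in Section~\ref{sec metap}. The only step that requires minor care is tracking the phase factor from the covariance formula and the weight shift under the symplectic change of variable, but since $S$ is fixed these contribute only $S$-dependent constants, which is consistent with the statement $\lesssim_{N,S}$.
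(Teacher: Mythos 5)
Your argument is correct. Note first that the paper does not prove this statement at all: Theorem \ref{met op properties} is recalled from the literature (de Gosson, Folland, Cordero--Nicola--Rodino), so there is no internal proof to match. Your route is the standard one and it works: the intertwining relation $\mu(S)\pi(z)=\pi(Sz)\mu(S)$ together with the covariance of the STFT reduces the Gabor matrix entry in modulus to $|V_\gamma(\mu(S)g)(w-Sz)|$, and since $\mu(S)$ preserves $\cS(\rd)$ (being a composition of Fourier transforms, rescalings and chirps), Proposition \ref{mod sp prop}(ii) gives superpolynomial decay with an $S$-dependent constant; the Schur/Young step with the moderate weight $v_s(w)\le v_{|s|}(w-Sz)\,v_s(Sz)\lesssim_S v_{|s|}(w-Sz)\,v_s(z)$ and $|\det S|=1$ then yields boundedness on $M^p_{v_s}$ (there is a harmless typo where you write $v_s(w-Sz)^{|s|}$ for $v_{|s|}(w-Sz)$, corrected in your next line, and for completeness one should extend from $f\in\cS$ by density for $p<\infty$ and by duality for $p=\infty$). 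It is worth contrasting this with the method the paper uses for its refined Theorem \ref{maint schw}: there the authors pass through Moyal's formula and the symplectic covariance of the Wigner distribution, $W(\mu(S)F)=WF\circ S^{-1}$, which is what lets them extract the Euler decomposition of $S$ and hence the uniform constant, the dispersive factor $(\det\Sigma)^{-1/2}$ and the anisotropic dilation $D'U$. Your intertwining argument is more elementary but buries all dependence on $S$ in the Schwartz seminorms of $\mu(S)g$, so it cannot produce those quantitative refinements; for the qualitative statement of Theorem \ref{met op properties}, however, it is entirely adequate.
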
 

General families of operators characterized by the sparsity of their phase-space representation were introduced in \cite{cgnr gen met,cgnr fio alg}. Given $S \in \Sp$ and $g \in \cS(\rd)$, we say that a linear operator $A : \cS(\rd) \to \cS'(\rd)$ is in the class $FIO(S)$ of \textit{generalized metaplectic operators} if there exists $H \in L^1(\rdd)$ such that 
\begin{equation} \label{def gen met}
|\la A \pi(z)g,\pi(w)g \ra | \le H(w-Sz), \quad w,z \in \rdd.
\end{equation}
The definition of $FIO(S)$ does not depend on the choice of $g \in \cS(\rd)\setminus\{0\}$. In fact, careful inspection of the proof of \cite[Prop. 3.1]{cgnr gen met} reveals that the class of admissible windows may be extended to $M^1(\rd)$, hence the estimate \eqref{def gen met} is equivalent to its \emph{polarized} version with two arbitrary windows $g,\gamma\in M^{1}(\rd)$, that is,
\begin{equation} \label{def gen met pol}
|\la A \pi(z)g,\pi(w)\gamma \ra | \le H(w-Sz), \quad w,z \in \rdd.
\end{equation}

Sparsity of the Gabor matrix of generalized metaplectic operators provides non-trivial algebraic properties for $FIO(S)$ in the spirit of Theorem \ref{weyl properties}, as detailed in the following result.  
\begin{theorem}\label{gen met properties} Let $S,S_1,S_2 \in \Sp$. 
	\begin{enumerate}
		\item An operator $T \in FIO(S)$ is bounded on $M^p(\rd)$ for any $1\le p \le \infty$.
		\item If $T_1 \in FIO(S_1)$ and $T_2 \in FIO(S_2)$, then $T_1T_2 \in FIO(S_1S_2)$.
		\item If $T \in FIO(S)$ is invertible on $L^2(\rd)$ then $T^{-1} \in FIO(S^{-1})$. 
		\item $T \in FIO(S)$ if and only if there exist $a_1,a_2 \in M^{\infty,1}(\rdd)$ such that 
		\[ T = a_1^{\w} \muS = \muS  a_2^{\w}. \] In particular, $a_2 = a_1 \circ S$. 
	\end{enumerate}
\end{theorem}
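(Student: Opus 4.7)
The four statements are tightly coupled through item (4), which provides a factorization of any $T\in FIO(S)$ as a Sj\"ostrand-class Weyl operator composed with a metaplectic operator. My plan is to prove (4) first and then deduce (1)--(3) as consequences, by transferring each question to the corresponding property of the symbol class $M^{\infty,1}(\rdd)$ combined with Theorems \ref{weyl properties} and \ref{met op properties}. The two algebraic tools I will rely on throughout are the intertwining relation $\muS^{-1}\pi(z)=\pi(S^{-1}z)\muS^{-1}$ and the symplectic covariance $(a\circ S)^\w=\muS^{-1}a^\w\muS$.

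\textbf{Proof of (4).} Given $T\in FIO(S)$, set $a_1^\w\coloneqq T\muS^{-1}$ and $a_2^\w\coloneqq\muS^{-1}T$; these prescriptions uniquely determine $a_1,a_2\in\cS'(\rdd)$. For $a_1$ the intertwining relation yields
\[\la a_1^\w\pi(z)g,\pi(w)\gamma\ra=\la T\pi(S^{-1}z)(\muS^{-1}g),\pi(w)\gamma\ra;\]
since $\muS^{-1}g\in M^1(\rd)$ by Theorem \ref{met op properties}, the polarized FIO estimate \eqref{def gen met pol} provides a bound by $H(w-S\cdot S^{-1}z)=H(w-z)$ with $H\in L^1(\rdd)$, so $a_1\in M^{\infty,1}(\rdd)$ by Theorem \ref{weyl properties}. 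Moving instead $\muS^{-1}$ onto the second entry of the inner product (and using unitarity of $\muS$), the same recipe produces for $a_2$ the bound $H(S(w-z))$, which still lies in $L^1(\rdd)$ because $|\det S|=1$. The converse implication reverses the same argument, and the identity $a_2=a_1\circ S$ follows from $\muS^{-1}a_1^\w\muS=a_2^\w$ via symplectic covariance and injectivity of the Weyl map.

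\textbf{Proof of (1)--(3) via (4).} For (1), write $T=a_1^\w\muS$ and compose the boundedness of $a_1^\w$ on every $M^p$ (Theorem \ref{weyl properties}) with the $M^p$-boundedness of $\muS$ (Theorem \ref{met op properties}). For (2), factorize $T_j=a_j^\w\mu(S_j)$, insert $\mu(S_1)^{-1}\mu(S_1)$ between $\mu(S_1)$ and $a_2^\w$, and use symplectic covariance to rewrite
\[T_1T_2=a_1^\w\,(a_2\circ S_1^{-1})^\w\,\mu(S_1S_2)\]
up to sign; since linear symplectic changes of variable preserve $M^{\infty,1}$ and $M^{\infty,1}$ is a Banach algebra under the Weyl composition (Sj\"ostrand \cite{sjo}, see also \cite{gro sjo}), the product of the two Weyl factors is itself of the form $c^\w$ with $c\in M^{\infty,1}$, and the converse direction of (4) gives $T_1T_2\in FIO(S_1S_2)$. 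For (3), note that $L^2$-invertibility of $T=a^\w\muS$ forces $L^2$-invertibility of $a^\w$; the Wiener-type inverse-closedness of $M^{\infty,1}$ under Weyl composition then yields $(a^\w)^{-1}=b^\w$ with $b\in M^{\infty,1}$, and symplectic covariance rewrites $T^{-1}=\muS^{-1}b^\w$ as $(b\circ S)^\w\mu(S^{-1})$, so (4) again delivers $T^{-1}\in FIO(S^{-1})$.

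\textbf{Expected difficulty.} Once (4) is in place, the deductions (1)--(3) are almost formal; the genuinely non-trivial input is the combination of the Banach algebra and Wiener-type properties of the Sj\"ostrand class $M^{\infty,1}$ under the Weyl product, which I will invoke from \cite{sjo,gro sjo} rather than reprove. A minor technical point to verify with care is that every Gabor matrix estimate is applied with legitimate windows, namely $\muS g,\muS\gamma\in M^1(\rd)$; this is precisely why the polarized form \eqref{def gen met pol} of the definition of $FIO(S)$ is needed in place of \eqref{def gen met}.
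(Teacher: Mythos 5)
The paper does not actually prove this theorem: it is quoted verbatim from the references \cite{cgnr gen met,cgnr fio alg}, where these properties of $FIO(S)$ were originally established, so there is no in-paper proof to compare against. Judged on its own merits, your argument is correct and is essentially the strategy of those sources: item (4) is obtained by conjugating $T$ with $\muS$, using the intertwining relation $\muS^{-1}\pi(z)=\pi(S^{-1}z)\muS^{-1}$ together with the polarized estimate \eqref{def gen met pol} (which is exactly why the paper takes care to extend the window class to $M^1(\rd)$), and then reading off membership of $a_1,a_2$ in $M^{\infty,1}(\rdd)$ from the characterization in Theorem \ref{weyl properties}; the relation $a_2=a_1\circ S$ follows from symplectic covariance and injectivity of the Weyl quantization. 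Your deductions of (1)--(3) from (4) are also sound, with the caveat that the genuinely deep inputs --- the Banach algebra property of $M^{\infty,1}$ under the Weyl product and its spectral invariance (Wiener property) on $L^2$ --- are imported from \cite{sjo,gro sjo} rather than proved; this is legitimate and is precisely how the cited papers proceed as well. One could alternatively prove (1) and (2) directly at the level of the Gabor matrix (Schur test for (1); for (2), the kernel composition $\int H_1(w-S_1u)H_2(u-S_2z)\,du=(H_1*(H_2\circ S_1^{-1}))(w-S_1S_2z)$ using $|\det S_1|=1$), which avoids (4) entirely, but the Wiener property (3) genuinely requires the spectral invariance of the Sj\"ostrand class in either approach, so your factorization-first route is a reasonable organization of the proof.
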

 
In view of Theorem \ref{met op properties} we observe that the Gabor matrix $\la \mu(S)\pi(z) g, \pi(w)\gamma\ra$ of a metaplectic operator $\mu(S)\in \Mp$ is well defined in the case where 
\begin{equation}\label{indcond} 
g\in M^p(\rd),\,\gamma\in M^q(\rd),\quad \frac1p+\frac1q \ge 1.
\end{equation}
To be precise, \[  \|\mu(S)\pi(z)g\|_{M^p}\leq \|\mu(S)\|_{op}\|\pi(z)g\|_{M^p}=\|\mu(S)\|_{op}\|g\|_{M^p},\quad z\in\rd,\]
hence by Proposition \ref{mod sp prop} $(iv)$
\begin{align*}|\la \mu(S)\pi(z)g,\pi(w)\gamma\ra|&\leq \|\mu(S)\|_{op}\|g\|_{M^p}\|\pi(w)\gamma\|_{M^{p'}}\\&\leq \|\mu(S)\|_{op}\|g\|_{M^p}\|\gamma\|_{M^{p'}}\\
&\leq \|\mu(S)\|_{op}\|g\|_{M^p}\|\gamma\|_{M^{q}},
\end{align*}
since from \eqref{indcond} we infer $q\leq p'$ and the inclusion $M^{q}(\rd)\subset M^{p'}(\rd)$ (Proposition \ref{mod sp prop} $(iii)$) yields the last inequality.

The same arguments apply to the Gabor matrix of $T \in FIO(S)$ as a consequence of Theorem \ref{gen met properties}. 
 
\subsection{Technical lemmas}
A key technical tool for the main results is the following set of estimates. 

\begin{lemma}\label{lem impr est}
	Let $s > 1$, $a,b,\sigma \ge 1$ and $v \in \bR$. Then
	\begin{equation}\label{lem uj 1}
	\int_{\bR} (a+|\sigma^{-1}u+v|)^{-s}(b+|u|)^{-s}du \lesssim_s (a+|v|)^{-s}b^{-s+1} + a^{-s+1}(b+|v|)^{-s+1},
	\end{equation}
	\begin{equation}\label{lem uj 2}
	\int_{\bR} (a+|u-v|)^{-s}(b+\sigma^{-1}|u|)^{-s} du \lesssim_s (a+\sigma^{-1}|v|)^{-s+1}b^{-s+1} + a^{-s+1}(b+\sigma^{-1}|v|)^{-s}.
	\end{equation}
\end{lemma}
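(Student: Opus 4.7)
The plan is to reduce each integral to a manageable one-dimensional convolution via a change of variable, split the resulting integral around the midpoint between the two ``peaks'' of the integrand, and absorb the Jacobian factor $\sigma$ using the elementary inequality
\[
\sigma y\,(b+\sigma y/2)^{-s} \le 2^s (b+y)^{-s+1}, \qquad y\ge 0,\ \sigma,b\ge 1,\ s>1,
\]
which follows by writing $\sigma y\,(b+\sigma y/2)^{-s} = 2(\sigma y/2)(b+\sigma y/2)^{-s}\le 2(b+\sigma y/2)^{-s+1}$ and noting that $b+\sigma y/2\ge (b+y)/2$ (since $\sigma\ge 1$).

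For \eqref{lem uj 1} I would first substitute $t=\sigma^{-1}u+v$, giving
\[
\int_{\bR}(a+|\sigma^{-1}u+v|)^{-s}(b+|u|)^{-s}\,du = \sigma\int_{\bR}(a+|t|)^{-s}(b+\sigma|t-v|)^{-s}\,dt,
\]
and then split the $t$-domain into $A=\{|t|\ge |v|/2\}$ and $B=\{|t|<|v|/2\}$. On $A$, pulling out $(a+|t|)^{-s}\le 2^s(a+|v|)^{-s}$ and integrating $(b+\sigma|t-v|)^{-s}$ across $\bR$ (via $r=\sigma(t-v)$) produces, after multiplying by $\sigma$, a contribution $\lesssim_s (a+|v|)^{-s}b^{-s+1}$, i.e.\ the first term on the right-hand side. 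On $B$ one has $|t-v|\ge |v|/2$, so $(b+\sigma|t-v|)^{-s}\le (b+\sigma|v|/2)^{-s}$; combined with the elementary bound $\int_B(a+|t|)^{-s}\,dt\lesssim \min(|v|a^{-s},a^{-s+1})$ this reduces the proof to the inequality $\sigma(b+\sigma|v|/2)^{-s}\min(|v|a^{-s},a^{-s+1})\lesssim_s a^{-s+1}(b+|v|)^{-s+1}$. For $|v|\le a$, the key inequality with $y=|v|$ gives $\sigma|v|(b+\sigma|v|/2)^{-s}a^{-s}\lesssim_s a^{-s}(b+|v|)^{-s+1}\le a^{-s+1}(b+|v|)^{-s+1}$ (using $a\ge 1$); for $|v|>a\ge 1$, dividing the key inequality by $|v|\ge 1$ and multiplying by $a^{-s+1}$ yields the same bound.

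For \eqref{lem uj 2}, the substitution $t=\sigma^{-1}u$ transforms the integral into $\sigma\int(b+|t|)^{-s}(a+\sigma|t-\sigma^{-1}v|)^{-s}\,dt$, which has precisely the same shape as the right-hand side of the identity above with the roles of $a$ and $b$ swapped and $v$ replaced by $\sigma^{-1}v$. Applying \eqref{lem uj 1} with these substitutions produces \eqref{lem uj 2} directly.

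The main technical obstacle is controlling the Jacobian factor $\sigma$ that the change of variable introduces: a crude estimate such as $(b+\sigma|t-v|)^{-s}\le (b+|t-v|)^{-s}$ loses an unwanted factor of $\sigma$, whereas the elementary inequality stated at the outset absorbs $\sigma$ into a weakened $(-s+1)$-exponent on the ``peaked'' side of the estimate without sacrificing decay in $|v|$.
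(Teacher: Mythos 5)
Your proof is correct and follows essentially the same route as the paper's: both split the integral according to whether $|\sigma^{-1}u+v|$ (your $|t|$ after substitution) exceeds $|v|/2$, and both absorb the Jacobian factor $\sigma$ by trading one power of decay via $\sigma(b+\sigma|v|/2)^{-s}\lesssim_s (b+|v|)^{-s+1}$. Your two refinements --- the $\min(|v|a^{-s},a^{-s+1})$ bound, which dispenses with the paper's separate treatment of the trivial case $|v|<1$, and the deduction of \eqref{lem uj 2} from \eqref{lem uj 1} by the substitution $t=\sigma^{-1}u$ rather than by repeating the case analysis --- are both sound.
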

\begin{proof}
	We prove \eqref{lem uj 1} under the assumption $|v|\ge 1$, otherwise the estimate is trivial since $\int_{\bR}(b+|u|)^{-s}du \lesssim b^{-s+1}$. If $|\sigma^{-1}u+v| \ge |v|/2$ then  $(a+|\sigma^{-1}u+v|)^{-s} \lesssim (a+|v|)^{-s}$, hence
	\[ \int_{\bR} (a+|\sigma^{-1}u+v|)^{-s}(b+|u|)^{-s}du \lesssim (a+|v|)^{-s}b^{-s+1}. \]
	If $|\sigma^{-1}u+v| \le |v|/2$ then $|u| \ge \sigma |v|/2$, hence
	\begin{align*}
	\int_{\bR} (a+|\sigma^{-1}u+v|)^{-s}(b+|u|)^{-s}du & \lesssim a^{-s+1}\sigma (b+\sigma|v|)^{-s} \\
	& \le  a^{-s+1} (b+|v|)^{-s+1}.
	\end{align*}
	The proof of \eqref{lem uj 2} in the non-trivial case $\sigma^{-1}|v|\ge 1$ follows by similar arguments, by considering separately the cases $|u-v|\ge |v|/2$ and $|u-v|<|v|/2$.
\end{proof}

\begin{remark}\label{rem conv ineq} For $s>d$ and $v \in \rd$, the convolution inequality \eqref{lem uj 2} with $\sigma =1$ and $a=b$ can be improved in $\rd$ as follows:
	\begin{equation}\label{conv ineq}
		\int_{\rd} (a+|u-v|)^{-s}(a+|u|)^{-s} du \lesssim_s a^{-s+d}(a+|v|)^{-s}.
	\end{equation}
Notice that for $a=1$ we have $v_{-s}*v_{-s} \lesssim v_{-s}$, cf.\ \cite[Lem. 11.1.1(c)]{gro book}.
\end{remark} 

\begin{lemma}\label{lem multi fine}
Let $\sigma_1, \ldots, \sigma_d \ge 1$ and define the matrices 
\[ \Sigma = \diag(\sigma_1,\ldots,\sigma_d), \quad D' =\Sigma^{-1} \oplus I, \quad D''= I\oplus \Sigma^{-1}. \] For any $s>2d$ 
\[ \int_{\rdd} (1+|v-D''u|)^{-s}(1+|D'u|)^{-s}du \lesssim_s (1+|D'v|)^{-s+2d}, \quad v \in \rdd. \] 
\end{lemma}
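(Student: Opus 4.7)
The plan is to deduce the bound from a single factorization identity, rather than integrating variables one at a time. The key observation is that the factor $(1+|D'v|)^{-s+2d}$ arises naturally from the pointwise inequality
\[
(1+|v-D''u|)(1+|D'u|) \ge \tfrac{1}{2}(1+|D'v|),
\]
which I claim holds for all $u,v \in \rdd$, uniformly in $\Sigma$. Granted this, write
\[
(1+|v-D''u|)^{-s}(1+|D'u|)^{-s} = \bigl[(1+|v-D''u|)(1+|D'u|)\bigr]^{-(s-2d)}\cdot (1+|v-D''u|)^{-2d}(1+|D'u|)^{-2d},
\]
and use $s-2d>0$ together with the pointwise inequality to pull out the factor $2^{s-2d}(1+|D'v|)^{-(s-2d)}$, leaving an integral of $(1+|v-D''u|)^{-2d}(1+|D'u|)^{-2d}$ to be controlled uniformly in $v$ and $\Sigma$.

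To prove the pointwise inequality, the plan is to verify by direct block computation the algebraic identity
\[
D'v = (v-D''u) + D'u - \bigl((I-\Sigma^{-1})(v_1-u_1),\;(I-\Sigma^{-1})u_2\bigr).
\]
Since $\sigma_j\ge 1$ gives $\|I-\Sigma^{-1}\|_{\mathrm{op}}\le 1$, and since the block-diagonal structure of $D'$ and $D''$ gives $|v_1-u_1|\le|v-D''u|$ and $|u_2|\le|D'u|$, this identity implies $|D'v|\le 2\bigl(|v-D''u|+|D'u|\bigr)$; the product lower bound then follows from $(1+a)(1+b)\ge 1+a+b$ and the elementary $1+x/2\ge(1+x)/2$.

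For the remaining uniform bound on $\int_{\rdd}(1+|v-D''u|)^{-2d}(1+|D'u|)^{-2d}\,du$, the same block-diagonal observation gives the trivial estimates $(1+|v-D''u|)^{-2d}\le (1+|v_1-u_1|)^{-2d}$ and $(1+|D'u|)^{-2d}\le (1+|u_2|)^{-2d}$, after which Fubini reduces everything to the product
\[
\int_{\rd}(1+|u_1|)^{-2d}du_1 \cdot \int_{\rd}(1+|u_2|)^{-2d}du_2,
\]
which is finite and $v,\Sigma$-independent because $2d>d$. Combining the three steps yields the desired estimate. I expect the only genuinely subtle point to be noticing the identity in the first step; once it is in hand, the rest is routine. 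A natural sanity check is the case $\Sigma=I$, where $D'=D''=I$ and the estimate reduces to Peetre's convolution inequality $v_{-s}\ast v_{-s}\lesssim v_{-s}$ (cf.\ Remark \ref{rem conv ineq}), confirming that the loss of exactly $2d$ in the exponent is the correct price for the non-trivial anisotropic scaling.
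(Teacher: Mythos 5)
Your proof is correct, and it takes a genuinely different route from the paper. The paper proves the lemma by viewing the integral as an iterated one-dimensional integral and applying the two scalar estimates of Lemma \ref{lem impr est} coordinate by coordinate (estimate \eqref{lem uj 2} in $u_1,\dots,u_d$ and \eqref{lem uj 1} in $u_{d+1},\dots,u_{2d}$), then inspecting the resulting sum of products $A^{-s+2d}B^{-s+2d}$ with $A+B>1+|D'v|$. You instead prove a single anisotropic Peetre-type inequality $1+|D'v|\le 2\,(1+|v-D''u|)(1+|D'u|)$ via an explicit algebraic identity — which I checked blockwise and which does hold, using only $\lV I-\Sigma^{-1}\rV_{\op}\le 1$ and the fact that the first block of $v-D''u$ is $v_1-u_1$ while the second block of $D'u$ is $u_2$ — and then split off exactly $2d$ powers so that the residual integral factors through $(1+|v_1-u_1|)^{-2d}(1+|u_2|)^{-2d}$ into two convergent $d$-dimensional integrals independent of $v$ and $\Sigma$. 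Your argument is shorter, bypasses Lemma \ref{lem impr est} entirely, and makes the uniformity in $\Sigma$ and the constants completely transparent; the paper's coordinatewise approach retains finer direction-by-direction information before the final crude step $A^{-s+2d}B^{-s+2d}\le (A+B)^{-s+2d}$, which could in principle be exploited for sharper anisotropic decay, but for the stated conclusion both methods lose the same $2d$ in the exponent. One cosmetic remark: your sanity check at $\Sigma=I$ is slightly overstated — the lemma then reads $v_{-s}*v_{-s}\lesssim v_{-s+2d}$, which is strictly weaker than the genuine convolution inequality $v_{-s}*v_{-s}\lesssim v_{-s}$ of Remark \ref{rem conv ineq}, so the loss of $2d$ is an artifact of the method rather than a necessary price even in the anisotropic case; this does not affect the validity of your proof of the lemma as stated.
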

\begin{proof} The integral under our attention is
\[ \int_{\rdd} \lc 1+\sum_{j=1}^d |v_j-u_j| + \sum_{j=d+1}^{2d} |v_j-\sigma_{j-d}^{-1}u_j|\rc^{-s}   \lc 1+\sum_{j=1}^d |\sigma_j^{-1}u_j| + \sum_{j=d+1}^{2d} |u_j|\rc^{-s} du. \]
We look at the latter as an iterated integral and we repeatedly apply Lemma \ref{lem impr est}; precisely we estimate each of the integrals with respect to $u_1,\ldots,u_d$ as in \eqref{lem uj 2} and the each one with respect to $u_{d+1},\ldots,u_{2d}$ as in \eqref{lem uj 1}. Careful inspection of the involved quantities reveals that the result after $2d$ steps is dominated by a sum of products of the form $A^{-s+2d}B^{-s+2d}$ with $A,B \ge 1$ such that \[ A+B = 2+ \sum_{j=1}^d \sigma_j^{-1}|v_j| + \sum_{j=d+1}^{2d}|v_j| > 1+|D'v|. \] The claim follows after noticing that $A^{-s+2d}B^{-s+2d} \le (A+B)^{-s+2d}$ since $s>2d$.
\end{proof}

\section{Proof of the main results}\label{sec maint}
We start this section with the proof of Theorem \ref{maint schw}, namely a pointwise inequality for the Gabor matrix with Gabor atoms in the Schwartz class. 

\begin{proof}[Proof of Theorem \ref{maint schw}] We use the Moyal formula \eqref{moyal}, the covariance property of Wigner distribution \eqref{wig cov}  and the symplectic covariance of the Weyl calculus \eqref{symp cov}. Hence
	\begin{align*}
	\left| \la \muS \pi(z)g, \pi(w)\gamma \ra \right|^2 & = \int_{\rdd} W(\muS \pi(z) g)(u) W(\pi(w)\gamma)(u)du \\ 
	& = \int_{\rdd}  W (\pi(z)g) (S^{-1}u) W\gamma(u-w) du \\
	& = \int_{\rdd} Wg (S^{-1}u-z) W\gamma(u-w)du \\
	& = \int_{\rdd} Wg (S^{-1}u + S^{-1}w - z) W\gamma(u)du. 
\end{align*} 
Direct application of Proposition \ref{mod sp prop} $(ii)$  yields, for any $s\ge 0$, 
\[\left| \la \muS \pi(z)g, \pi(w)\gamma \ra \right|^2 \lesssim \int_{\rdd} v_{-s}(S^{-1}u + S^{-1}w - z) v_{-s}(u) du.\]
Recall that $S=U^\top DV$, hence  $S^{-1}=V^{\top}D^{-1}U$ and therefore
\[\left| \la \muS \pi(z)g, \pi(w)\gamma \ra \right|^2 \lesssim \int_{\rdd} v_{-s}(D^{-1}u + V(S^{-1}w-z)) v_{-s}(u) du.\]
Set $v\coloneqq V(S^{-1}w-z)$. The change of variable $u=D'' u'$ leads to
\[ \left| \la \muS \pi(z)g, \pi(w)\gamma \ra \right|^2 \lesssim (\det\Sigma)^{-1} \int_{\rdd} (1+|D'u+v|)^{-s}(1+|D''u|)^{-s}du.  \]
We fix $s>2d$ and apply Lemma \ref{lem multi fine} with $D'$ and $D''$ interchanged. The claim then follows after setting $N=(s-2d)/2$, since $s>2d$ is arbitrarily chosen and $D''v = D'U(w-Sz)$. \end{proof} 

We now prove Theorem \ref{maint mp}, where Gabor atoms in suitable modulation spaces are considered.

\begin{proof}[Proof of Theorem \ref{maint mp}] Fix $\phi,\psi \in \cS(\rd)\smo$ with $\lV \phi \rV_{L^2} = \lV \psi \rV_{L^2} = 1$; the reconstruction formula \eqref{rec form cont} applied to $g\in M^p(\rd)$, $\gamma\in  M^q(\rd)$ (resp. $g, \gamma \in \minftys$) yields
	\[ g = \int_{\rdd} F(u)\pi(u)\phi du, \quad F=V_\phi g \in L^p(\rdd)\,\,  (\mbox{resp.} \,\,F=V_\phi g \in L^\infty_{v_s}(\rdd) )\]
	\[ \gamma = \int_{\rdd} G(v)\pi(v)\psi dv, \quad G=V_\psi\gamma \in L^q(\rdd)\,\, (\mbox{resp.} \,\, G=V_\psi \gamma \in L^\infty_{v_s}(\rdd) ). \]
	Then we have
	\begin{align*}
	\left| \la \muS \pi(z)g, \pi(w)\gamma \ra \right| & \le \int_{\bR^{4d}} |F(u)| |G(v)| |\la \muS \pi(z+u)\phi,\pi(w+v)\psi \ra| du dv \\
	& = \int_{\bR^{4d}}  |F(u-z)| |G(v-w)| |\la \muS \pi(u)\phi,\pi(v)\psi \ra| du dv.
	\end{align*}
	Direct application of Theorem \ref{maint schw}  with $N>\max\{2d,s\}$ (the reason of this choice will be clear in a moment) yields
	\[ \left| \la \muS \pi(z)g, \pi(w)\gamma \ra \right| 
	\lesssim_N (\det\Sigma)^{-1/2} \int_{\bR^{4d}}  |F(u-z)| |G(v-w)| v_{-N}(D'U v - D''V u) du dv. \]
	Set $\wt{F} = F\circ (D''V)^{-1}$ and $\wt{G} = G\circ (D'U)^{-1}$. Then
	\begin{align*}
	\left| \la \muS \pi(z)g, \pi(w)\gamma \ra \right| 
	& \lesssim_N (\det\Sigma)^{3/2} \int_{\bR^{4d}}  |\wt{F}(u-D''Vz)| |\wt{G}(v-D'U w)| v_{-N}(v-u) du dv \\
	& = (\det\Sigma)^{3/2} \int_{\bR^{4d}}  |\wt{F}(u)| |\wt{G}(v+D''Vz-D'U w)| v_{-N}(v-u) du dv \\
	& = (\det\Sigma)^{3/2} (|\wt{F}| * v_{-N} * |\wt{G}|^{\vee})(D'U w - D''V z) \\
	& = H(D'U(w-Sz)), 
	\end{align*}
	where we defined 
	\[ H(u) = (\det\Sigma)^{3/2} (v_{-N} * |\wt{F}| * |\wt{G}|^{\vee}) (u), \quad u \in \rdd. \] 
For $g \in M^p(\rd)$ and $\gamma \in M^q(\rd)$ we apply Young's inequality to prove that $H \in L^r(\rdd)$ for $1/p+1/q=1+1/r$, cf.\ \eqref{indcond}. In particular, since $N>2d$, 
	\begin{align*} \| H\|_{L^r} & \le (\det\Sigma)^{3/2} \|v_{-N}\|_{L^1} \||\wt{F}| * |\wt{G}|^{\vee} \|_{L^r} \\ & \lesssim (\det\Sigma)^{3/2-1/p-1/q} \lV F \rV_{L^p} \lV G \rV_{L^q} \\ & \lesssim (\det\Sigma)^{1/2-1/r} \lV g \rV_{M^p} \lV \gamma \rV_{M^q}. \end{align*}

For $g,\gamma \in \minftys$, $s>2d$, we note that
\[ |\wt{F}(u)| \le \lV g \rV_{\minftys}(1+|(D'')^{-1}u|)^{-s}, \quad |\wt{G}(u)| \le \lV \gamma \rV_{\minftys}(1+|(D')^{-1}u|)^{-s}.  \]
Therefore, since $N>s$ and again by Young's inequality,
	\begin{align*} \| H\|_{L^\infty_{v_{s-2d}}} & \le (\det\Sigma)^{3/2} \|v_{-N}\|_{L^1_{v_{s-2d}}} \||\wt{F}| * |\wt{G}|^{\vee} \|_{L^{\infty}_{v_{s-2d}}} \\ & \lesssim (\det\Sigma)^{3/2} \| g \|_{\minftys} \| \gamma \|_{\minftys} \lV v_{-s}((D')^{-1}\cdot) \ast v_{-s}((D'')^{-1}\cdot)\rV_{L^\infty_{v_{s-2d}}}\\ & \lesssim (\det\Sigma)^{-1/2} \lV g \rV_{\minftys} \lV \gamma \rV_{\minftys},    \end{align*}
where in the last step we used Lemma \ref{lem multi fine} with the substitutions $u\mapsto (D')^{-1}(D'')^{-1}u$ and $v \mapsto (D')^{-1}v$.
\end{proof}

\begin{remark} \label{rem maint diag}
	Notice that after setting $\wt{H} = H\circ D'U$ the estimate \eqref{eq maint mp} reads
	\[ 	\left| \la \muS \pi(z)g, \pi(w)\gamma \ra \right| \le \wt{H}(w-Sz), \] 
	while \eqref{eq norm maint mp} becomes
	\[ \| \wt{H} \|_{L^r} \lesssim (\det\Sigma)^{1/2} \lV g \rV_{M^p} \lV \gamma \rV_{M^q}. \] 
	It is then clear that there is a trade-off between phase-space concentration of $\muS$ along the graph of $S$ and the spreading of wave packets.
\end{remark}

We conclude with a result in the same spirit for generalized metaplectic operators. 

\begin{theorem}\label{maint gen met}
	Let $1\le p,q,r \le \infty$ satisfy $1/p + 1/q = 1+ 1/r$. Consider $S\in \Sp$ with an  Euler decomposition  $(U,V,\Sigma)$, $a \in M^{\infty,1}(\rdd)$ so that $T\coloneqq a^\w \muS \in FIO(S)$, cf.\ Theorem \ref{gen met properties}. For any $g \in M^p(\rd)$, $\gamma \in M^q(\rd)$ there exists $H \in L^r(\rdd)$ such that, for any $z,w \in \rdd$,
\begin{equation}
\left| \la T \pi(z)g, \pi(w)\gamma \ra \right| \le H(D'U(w-Sz)),
\end{equation} 
with 
\[ \lV H \rV_{L^r} \le  (\det\Sigma)^{1/2-1/r} \lV a \rV_{M^{\infty,1}} \lV g \rV_{M^p} \lV \gamma \rV_{M^q}. \] 
\end{theorem}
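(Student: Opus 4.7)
} The strategy is to factor $T = a^\w\mu(S)$ via Theorem \ref{gen met properties}(4), insert a resolution of the identity between the two factors, and then apply the Gabor-matrix estimate for $\mu(S)$ from Theorem \ref{maint mp}(i) alongside the Sj\"ostrand estimate of Theorem \ref{weyl properties} for $a^\w$. First I would fix two auxiliary windows $\phi,\psi\in\cS(\rd)\smo$ with $\|\phi\|_{L^2}=\|\psi\|_{L^2}=1$ (so $\phi,\psi\in M^1(\rd)$) and use $\mathrm{Id}=V_\phi^*V_\phi$ between $\mu(S)$ and $a^\w$ to write
\[
\la T\pi(z)g,\pi(w)\gamma\ra=\int_{\rdd}\la \mu(S)\pi(z)g,\pi(u)\phi\ra\,\la a^\w\pi(u)\phi,\pi(w)\gamma\ra\,du.
\]

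The two resulting factors are then controlled separately. For the first, Theorem \ref{maint mp}(i) applied with Gabor atoms $g\in M^p$ and $\phi\in M^1$ (exponents $1/p+1/1=1+1/p$) gives $|\la \mu(S)\pi(z)g,\pi(u)\phi\ra|\le H_\mu(D'U(u-Sz))$ with $\|H_\mu\|_{L^p}\le(\det\Sigma)^{1/2-1/p}\|g\|_{M^p}\|\phi\|_{M^1}$. For the second, I would invoke the inversion formula on $\pi(w)\gamma$ with window $\psi$ (using $|V_\psi(\pi(w)\gamma)(v)|=|V_\psi\gamma(v-w)|$) to reduce the matrix element to atoms in $M^1$, and then apply Theorem \ref{weyl properties} to both Schwartz windows $\phi,\psi$, obtaining
\[
|\la a^\w\pi(u)\phi,\pi(w)\gamma\ra|\le \int_{\rdd}|V_\psi\gamma(v-w)|\,H_a(v-u)\,dv,\qquad \|H_a\|_{L^1}\lesssim\|a\|_{M^{\infty,1}}.
\]

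Next, substituting the two pointwise bounds, introducing $y=w-Sz$, and making the changes of variable $u\mapsto u-Sz$, $v\mapsto v-w$, the triple integral rearranges into the nested convolution
\[
F(y):=\big(\tilde H_\mu \ast K^\vee\big)(y),\qquad \tilde H_\mu(u)=H_\mu(D'Uu),\quad K=|V_\psi\gamma|\ast H_a^\vee.
\]
Two applications of Young's inequality then yield $\|K\|_{L^q}\le\|V_\psi\gamma\|_{L^q}\|H_a\|_{L^1}\lesssim \|\gamma\|_{M^q}\|a\|_{M^{\infty,1}}$ and, since $1/p+1/q=1+1/r$, $\|F\|_{L^r}\le \|\tilde H_\mu\|_{L^p}\|K\|_{L^q}$.

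Finally one defines $H$ by $H(y')=F(U^\top(D')^{-1}y')$, so that $|\la T\pi(z)g,\pi(w)\gamma\ra|\le H(D'U(w-Sz))$. The main obstacle and the delicate part is tracking the powers of $\det\Sigma$: the change of variable $y'=D'Uy$ costs a factor $(\det\Sigma)^{-1/r}$ in $\|H\|_{L^r}$ relative to $\|F\|_{L^r}$, while the pre-composition $u\mapsto D'Uu$ in $\tilde H_\mu$ produces a factor $(\det\Sigma)^{1/p}$ in $\|\tilde H_\mu\|_{L^p}$. Together with the $(\det\Sigma)^{1/2-1/p}$ already carried by $\|H_\mu\|_{L^p}$ from Theorem \ref{maint mp}(i), these exponents telescope exactly to the claimed $(\det\Sigma)^{1/2-1/r}$, and the remaining norm factors match $\|a\|_{M^{\infty,1}}\|g\|_{M^p}\|\gamma\|_{M^q}$ as required.
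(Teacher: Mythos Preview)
Your argument is correct and follows the same overall template as the paper: factor $T=a^\w\mu(S)$, insert a resolution of the identity between the two factors, estimate the Gabor matrix of $\mu(S)$ via Theorem~\ref{maint mp}(i), estimate the Gabor matrix of $a^\w$ via Theorem~\ref{weyl properties}, recognise a convolution in the variable $w-Sz$, and finish with Young's inequality while tracking the $\det\Sigma$ powers. The final accounting of exponents you describe is accurate.

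The one organisational difference worth noting is where the ``rough'' window $\gamma\in M^q$ enters. The paper inserts the identity with window $\gamma$ itself, applies Theorem~\ref{maint mp}(i) directly with the pair $(g,\gamma)\in M^p\times M^q$ to get $H_S\in L^r$, and uses Theorem~\ref{weyl properties} with both windows equal to $\gamma$; this yields a single convolution $(H_a\circ(D'U)^{-1})\ast H_S$ and a slightly shorter computation. You instead keep the windows in Theorem~\ref{weyl properties} Schwartz (hence safely in $M^1$) by introducing auxiliary $\phi,\psi\in\cS$ and feeding $\gamma$ in through the inversion formula; the price is a nested convolution and one extra application of Young, but the payoff is that every invocation of Theorem~\ref{weyl properties} is with $M^1$ windows exactly as that theorem is stated. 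Both routes lead to the same bound.
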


\begin{proof}
We assume $\| g\|_{L^2}=\|\gamma\|_{L^2}=1$ without loss of generality. Denoting by $K_{\muS}(w,z)=\la \muS \pi(z)g, \pi(w)\gamma \ra$ the Gabor matrix of $\muS$ and similarly for $K_{a^\w}(w,z)=\la a^\w \pi(z)\gamma,\pi(w)\gamma\ra$, in view of \eqref{gabor ker}, by Theorems \ref{weyl properties} and \ref{gen met properties} we have 
\begin{align*}
|\la T \pi(z)g, \pi(w)\gamma \ra| & = |\la a^\w \muS \pi(z)g, \pi(w)\gamma \ra| \\
& \le \int_{\rdd} |K_{a^\w}(w,u)| |K_{\muS}(u,z)| du \\
& = \int_{\rdd} |\la a^\w \pi(u)\gamma, \pi(w)\gamma \ra| |\la \muS \pi(z)g, \pi(u)\gamma \ra|  du\\
& \le \int_{\rdd} H_a(w-u) H_S(D'Uu-D''Vz)du, \end{align*} where $H_S$ is the controlling function in Theorem \ref{maint mp} $(i)$ and $H_a$ is the one appearing in Theorem \ref{weyl properties} with $g=\gamma$; in particular $\lV H_a \rV_{L^1} \asymp \lV a \rV_{M^{\infty,1}}$. The substitution $y = D'U(w-u)$ yields
\[ |\la T \pi(z)g, \pi(w)\gamma \ra| \le (\det\Sigma) \left[ (H_a\circ (D'U)^{-1})*H_S\right] (D'U(w-Sz)). \]

The claim follows by Young inequality and Theorem \ref{maint mp} $(i)$ after setting $H=(\det\Sigma) (H_a\circ (D'U)^{-1})*H_S$:
\begin{align*} \lV H \rV_{L^r} & \le (\det\Sigma) \lV H_a\circ (D'U)^{-1}\rV_{L^1} \lV H_S\rV_{L^r} \\
& = \lV H_a\rV_{L^1} \lV H_S\rV_{L^r} \\ & \lesssim (\det\Sigma)^{1/2-1/r} \lV a \rV_{M^{\infty,1}} \lV g \rV_{M^p}\lV\gamma\rV_{M^q}.
\end{align*}
\end{proof}

We conclude with the proof of Theorem \ref{maint prop sing}; namely we study how the modulation space regularity on a cone in the phase space behaves under the action of a metaplectic operator. 

\begin{proof}[Proof of Theorem \ref{maint prop sing}]
	Fix $g,\gamma \in \cS(\rd)\smo$ with $\lV g \rV_{L^2} = \| \gamma \|_{L^2} = 1$, and $\Gamma$ and $\Gamma'$ as in the statement. From \eqref{gabor ker} with $A=\muS$ and Theorem \ref{maint schw}, for any $N>0$ we have
	\begin{align*}
	|V_{\gamma} (\muS f)(w)| & \le \int_{\rdd} |K_{\muS}(w,z)| |V_{g} f(z)|dz \\
	& \lesssim_N (\det\Sigma)^{-1/2} \int_{\rdd} v_{-N}(D'U(w-Sz)) |V_{g} f(z)|dz \\ 
	& \lesssim_N (\det\Sigma)^{-1/2} \int_{\rdd} H(w-Sz) |V_{g} f(z)|dz, 
	\end{align*}
where we set $H = v_{-N} \circ D' U$.  After naming $G = H\circ S = v_{-N} \circ D''V$ we apply H\"older's inequality and get
	\begin{align*}
I & \coloneqq \lV \mu(S)f \rV_{M^1_{(\gamma)}(S(\Gamma'))} \\ & = \int_{S(\Gamma')}|V_{\gamma} (\muS f)(w)| dw\\  & = \int_{\Gamma'} |V_{\gamma} (\muS f)(Sw)| dw \\
& \lesssim (\det\Sigma)^{-1/2} \int_{\Gamma'} \int_{\rdd} G(w-z) |V_{g} f(z)|dz dw.
\end{align*}
We then have $I \lesssim I_1+I_2$, where 
\[ I_1 \coloneqq  (\det\Sigma)^{-1/2}\int_{\Gamma'}\int_{\Gamma} G(w-z) |V_{g} f(z)| dzdw,  \]
\[ I_2 \coloneqq (\det\Sigma)^{-1/2}\int_{\Gamma'}\int_{\Gamma^c} G(w-z) |V_{g} f(z)|dzdw.  \]
Young's inequality yields
\begin{equation*} I_1  \le \lV G \rV_{L^1} \lV V_{g} f \cdot 1_{\Gamma} \rV_{L^1} \lesssim (\det\Sigma)^{1/2} \lV f \rV_{M^1_{(g)}(\Gamma)}. 
\end{equation*}
After setting $F(z) = |V_{g}f(z)|v_{-r}(z)$, the remaining integral is
\[ I_2 = (\det\Sigma)^{-1/2}\int_{\Gamma'}\int_{\Gamma^c} G(w-z)v_{r}(z) F(z)dz. \] The key point is now that
\[  1+|w-z| \asymp \max\{1+|w|, 1+|z|\}, \quad w \in \Gamma',\, z\in \Gamma^c,  \] hence
\begin{align*} I_2 & \lesssim (\det\Sigma)^{-1/2}\int_{\Gamma'}\int_{\Gamma^c} G(w-z)v_{r}(w-z) F(z)dz \\ 
& \le (\det\Sigma)^{-1/2} \lV (G\cdot v_r)*F \rV_{L^1} \\
& \lesssim (\det\Sigma)^{-1/2} \lV G\cdot v_r \rV_{L^1} \lV f \rV_{M^1_{v_{-r}}}. \end{align*}
Therefore, the remaining integral to estimate is 
\[ \lV G\cdot v_r \rV_{L^1} = \int_{\rdd} (1+|D''z|)^{-N} (1+|z|)^r dz.  \]
Recall that $D''=I\oplus \Sigma^{-1}$, cf.\ \eqref{def D'D''}, and consider the elementary estimates
\[ v_{-N}(D''z) \le v_{-N/2d}(z_1)\cdots v_{-N/2d}(z_d) v_{-N/2d}(\sigma_1^{-1}z_{d+1})\cdots v_{-N/2d}(\sigma_d^{-1}z_{2d}), \]
\[ v_r(z) \le v_{r}(z_1)\cdots v_r(z_{2d}). \]
As a result, the integral is dominated by $A^dB_1 \cdots B_d$, where 
\[ A\coloneqq \int_{\bR} (1+|x|)^{-N/2d + r} dx, \] \[ B_j \coloneqq \int_{\bR} (1+\sigma_j^{-1}|x|)^{-N/2d}(1+|x|)^r dx, \quad j=1,\ldots,d. \] 
If $N$ is large enough then $A < \infty$ and $B_j \lesssim \sigma_j^{1+r}$, therefore 
\[ I_2 \lesssim (\det\Sigma)^{1/2+r} \lV f \rV_{M^1_{v_{-r}}}, \] 
and the claim follows.
\end{proof}

\begin{remark} \begin{enumerate}
\item Condition \eqref{M1 cone def} can be generalized to introduce the notion of $M^p$-regularity, $1 \le p \le \infty$, on the cone $\Gamma$ with respect to $g \in \cS(\rd)\smo$. The latter is satisfied for $f\in \cS'(\rd)$ if
\begin{equation}\label{Mp cone def} \lV f \rV_{M^p_{(g)}(\Gamma)} \coloneqq \lV V_g f \cdot 1_{\Gamma} \rV_{L^p} < \infty. \end{equation} Weighted versions of such conditions can be defined similarly. The proof of Theorem \ref{maint prop sing} can be easily modified in order to prove the estimate 
\begin{equation}\label{Mp prop sing est} \lV \muS f \rV_{M^p_{(\gamma)}(S (\Gamma'))} \lesssim (\det\Sigma)^{1/2} \lc \lV f \rV_{M^p_{(g)}(\Gamma)} + (\det\Sigma)^{r}\lV f \rV_{M^p_{v_{-r}}} \rc, \end{equation}
which however is not sharp unless $p=1$ or $p=\infty$. We postpone further investigations on the issue to a subsequent paper. \\
\item The notion of $M^p$-regularity does not depend on the window $g$ used to compute $V_g f$ in \eqref{Mp cone def} provided that a slightly smaller cone is allowed when changing window. This is indeed a consequence of \eqref{Mp prop sing est} in the case where $S=I$. The properties of $M^p_{(g)}(\Gamma)$ as a function space will be object of future studies. 
\end{enumerate}  
\end{remark}

\begin{corollary}\label{cor34}
Consider $1\le p \le \infty$. There exists $C>0$ such that, for any $f \in M^p(\rd)$, $S\in\Sp$,	\[ \| \mu(S)f \|_{M^p} \le C (\det\Sigma)^{|1/2-1/p|}\| f \|_{M^p}. \] 
\end{corollary}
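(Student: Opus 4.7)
The plan is to establish sharp bounds at the three endpoints $p=1,2,\infty$ for the integral operator associated with the Gabor matrix of $\muS$, and then interpolate via Riesz--Thorin on $L^{p}(\rdd)$. Fix a window $g\in\cS(\rd)\smo$ with $\|g\|_{L^{2}}=1$ and consider the integral operator $T_{S}F(w)=\int K_{\muS}(w,z)F(z)\,dz$, where $K_{\muS}(w,z)=\langle \muS\pi(z)g,\pi(w)g\rangle$. The Gabor matrix representation \eqref{gabor ker} gives $V_{g}(\muS f)=T_{S}V_{g}f$, so with $g$ used throughout to define the $M^{p}$ norm we have $\|\muS\|_{M^{p}\to M^{p}}\le \|T_{S}\|_{L^{p}\to L^{p}}$; it thus suffices to bound $T_S$ on $L^{p}(\rdd)$.

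Apply Theorem~\ref{maint mp}(i) with $p=q=r=1$ to get $H\in L^{1}(\rdd)$ with $\|H\|_{L^{1}}\lesssim (\det\Sigma)^{-1/2}$ and $|K_{\muS}(w,z)|\le H(D'U(w-Sz))$. A direct computation using $S=U^{\top}DV$ and $D'D=I\oplus\Sigma^{-1}=D''$ yields the identity $D'US=D''V$, and since $|\det(D'U)|=|\det(D''V)|=(\det\Sigma)^{-1}$, the change of variable $y=D'U(w-Sz)$ gives
\[ \sup_{z}\int H(D'U(w-Sz))\,dw \;=\; \sup_{w}\int H(D'U(w-Sz))\,dz \;=\; (\det\Sigma)\|H\|_{L^{1}}\;\lesssim\;(\det\Sigma)^{1/2}. \]
Schur's test therefore gives $\|T_{S}\|_{L^{1}\to L^{1}},\,\|T_{S}\|_{L^{\infty}\to L^{\infty}}\lesssim(\det\Sigma)^{1/2}$. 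At the $L^{2}$ endpoint the kernel bound is not sharp; instead factor $T_{S}=V_{g}\,\muS\,V_{g}^{\ast}$. Since $\|g\|_{L^{2}}=1$, the Moyal formula \eqref{moyal} shows that $V_{g}\colon L^{2}(\rd)\to L^{2}(\rdd)$ is an isometry, so $V_{g}^{\ast}$ has norm $1$; combined with the unitarity of $\muS$ on $L^{2}(\rd)$ this yields $\|T_{S}\|_{L^{2}\to L^{2}}\le 1$.

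Riesz--Thorin interpolation between $L^{1}$ and $L^{2}$ (resp.\ between $L^{2}$ and $L^{\infty}$) then gives $\|T_{S}\|_{L^{p}\to L^{p}}\lesssim(\det\Sigma)^{|1/2-1/p|}$ for every $1\le p\le\infty$, from which the corollary follows. The argument is essentially a standard three-endpoint interpolation; the only mildly delicate step is verifying the identity $D'US=D''V$ and tracking Jacobians so that the factor $(\det\Sigma)^{-1/2}$ coming from $\|H\|_{L^{1}}$ and the factor $(\det\Sigma)$ coming from the change of variable combine to exactly $(\det\Sigma)^{1/2}$ at the $p=1,\infty$ endpoints (and the crucial cancellation to $1$ at $p=2$ is forced by unitarity rather than by the kernel estimate).
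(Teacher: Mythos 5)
Your proof is correct, but it follows a genuinely different route from the paper's. The paper obtains the $p=1$ endpoint by specializing Theorem~\ref{maint prop sing} (with $\Gamma=\Gamma'=\rdd\smo$ and $r=0$), gets $p=\infty$ by duality from the $p=1$ bound for $\mu(S^{-1})$ (which has the same singular values) together with unitarity on $L^2$, and then invokes complex interpolation of modulation spaces, citing the fact that they interpolate like the corresponding $L^p$ spaces. You instead transfer everything to Lebesgue spaces on phase space: you bound both the $p=1$ and $p=\infty$ endpoints by a Schur test on the kernel $H(D'U(w-Sz))$ furnished by Theorem~\ref{maint mp}(i) with $p=q=r=1$ (your Jacobian bookkeeping $|\det(D'U)|=|\det(D''V)|=(\det\Sigma)^{-1}$ and the identity $D'US=D''V$ are correct, and $\det S=1$ takes care of the $z$-integral), obtain the $L^2$ endpoint from the factorization $T_S=V_g\,\muS\,V_g^{*}$ and the Moyal isometry \eqref{moyal}, and finish with Riesz--Thorin applied to the single operator $T_S$ on $L^p(\rdd)$. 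What your approach buys is self-containedness: it bypasses both Theorem~\ref{maint prop sing} and the interpolation theory of modulation spaces, replacing them with the classical Schur and Riesz--Thorin theorems on $L^p(\rdd)$; the price is the need to verify that the dominated-convolution kernel estimate survives the two changes of variables, which you do. The paper's route is shorter given the machinery already in place and, unlike a pure Schur-test argument, makes transparent why the $L^2$ bound is $1$ (unitarity) rather than the non-sharp $(\det\Sigma)^{1/2}$ that the kernel alone would give --- a point you also flag correctly.
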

\begin{proof}
	By choosing $\Gamma=\Gamma'=\rdd\smo$ and $r=0$ in Theorem \ref{maint prop sing} we see that the desired estimate holds for $p=1$. Since $\mu(S)$
	is unitary on $L^2(\rd)$, the operator $\mu(S^{-1})$, and therefore $\mu(S)$, satisfies the same estimate for $p=\infty$. Interpolating with the trivial $L^2$-estimate, we obtain the desired result (modulation spaces interpolate like the corresponding $L^p$ spaces \cite{F1}).
\end{proof}

\section{Applications to the free particle propagator}\label{sec free prop}

Let us consider the free particle propagator $U(t)=e^{i(t/2\pi)\triangle}$ and the corresponding classical flow \eqref{schr prop}; a straightforward computation shows that the largest $d$ singular values  of $\cA_t$ coincide: \[ \sigma_j = \sigma(t) = (1+2t^2 + 2(t^2+t^4)^{1/2})^{1/2} = \sqrt{1+t^2}+|t|, \quad j=1,\ldots,d. \] 
Note in particular that $\sigma(t)$ is comparable to $ 1+|t|$, $t \in \bR$. An example of Euler decomposition  $(U_t,V_t,\Sigma_t)$ of $\cA_t$ for $t\ge0$ is given by 
\[ U_t = (1+\sigma(t)^2)^{-1/2} \begin{bmatrix}
\sigma(t)I & I \\ -I & \sigma(t)I
\end{bmatrix}, \quad V_t = (1+\sigma(t)^2)^{-1/2} \begin{bmatrix} I & \sigma(t)I \\ -\sigma(t)I & I \end{bmatrix}. \] Theorem \ref{maint schw} thus yields
\[ 	\left| \la e^{i(t/2\pi)\triangle} \pi(z)g, \pi(w)\gamma \ra \right| \le C (1+|t|)^{-d/2} (1+  |D'_tU_t(w-\cA_t z)|)^{-N}, \quad z,w \in \rdd.  \]
The spreading phenomenon manifests itself as a dilation by 
\[ D'_tU_t = (1+\sigma(t)^2)^{-1/2}\begin{bmatrix} I & \sigma(t)^{-1}I \\ -I & \sigma(t)I \end{bmatrix}. \]
We attempt to shed some light on the apparently unintelligible structure of such matrix by means of a toy example in dimension $d=1$. Let $z=0$ for simplicity and assume that the atom $g$ is concentrated on the box $Q=\{ (x,\xi)\in \bR^2 : |x|<1,\, |\xi|<1 \}$ in the time-frequency plane. In view of \eqref{maint estimate} we are lead to consider
\[ (D'_tU_t)^{-1}(Q) = \{ (x,\xi) : |x+\sigma^{-1}(t)\xi|<\sqrt{1+\sigma(t)^2}, \, |x-\sigma(t)\xi| < \sqrt{1+\sigma(t)^2} \}. \]
Therefore, the effect of $D'_t U_t$ on $Q$ ultimately amounts to a horizontal stretch by a factor of approximately $\sigma(t)$. This is consistent with the expected phase-space evolution of a wave packet, as shown in \eqref{intro At free}; see also \cite[Fig. 1-8]{cnr spars gabor} for illuminating graphic representations. We stress that the estimate \eqref{intro diag est schr} is completely blind to such spreading effect.

\section*{Acknowledgments.} The authors are members of the Gruppo Nazionale per l'Analisi Matematica, la Probabilit\`a e le loro Applicazioni (GNAMPA) of the Istituto Nazionale di Alta Matematica (INdAM). The present research was partially supported by MIUR grant “Dipartimenti di Eccellenza” 2018–2022, CUP: E11G18000350001, DISMA, Politecnico di Torino.

\end{document}